\newtheorem{definition}{Definition}
\newtheorem{theorem}{Theorem}
\newtheorem{lem}{Lemma}
\newtheorem{Assumption}{Assumption}
\newtheorem{Remark}{Remark}
\newtheorem{Proposition}{Proposition}
\newtheorem{Problem}{Problem}
\begin{document}

\begin{frontmatter}


 \tnotetext[label1]{This work was supported by the National Natural Science Foundation of China under Grant  No.62373229.}

\title{Model-free stochastic linear quadratic  design by semidefinite programming}

\author[label1,label2]{Jing Guo}
\author[label3]{Xiushan Jiang}
\author[label1]{Weihai Zhang\corref{cor1}}
\ead{w\_hzhang@163.com }
\cortext[cor1]{Corresponding author.}
 \address[label1]{College of Electrical Engineering and Automation, Shandong University of Science and Technology, 
             Qingdao,
            266590,
	            China}
\address[label2]{School of Mathematics and Statistics,
		Shandong University of Technology,
	             Zibo,
	             250049,
	            China}
\address[label3]{College of New Energy, China University of Petroleum (East China),
            	Qingdao,
            	266580,
            	China}


\begin{abstract}
In this article, we study a model-free design approach for stochastic linear quadratic (SLQ) controllers. Based on the convexity of the SLQ dual problem and the Karush-Kuhn-Tucker (KKT) conditions, we find the relationship between the optimal point of the dual problem and the Q-function, which can be used to develop a novel model-free semidefinite programming (SDP)  algorithm for deriving optimal control gain. This study provides a new optimization perspective for understanding Q-learning algorithms and lays a theoretical foundation for effective reinforcement learning (RL) algorithms. Finally, the effectiveness of the proposed model-free SDP algorithm is demonstrated by two case simulations.
\end{abstract}



\begin{keyword}


SLQ dual problem,  model-free design,  reinforcement learning, Q-learning,  convex optimization,
semidefinite programming.
\end{keyword}

\end{frontmatter}


\section{Introduction}
Reinforcement learning (RL), which has its roots  in the psychology of animal learning,  is an approach to learning by
 trial-and-error.
RL originally attracted a lot of attention from the artificial intelligence field, and Sutton established the link between RL and the control domain  \cite{sutton1999reinforcement}. 
In the control domain, RL approach means the interaction of the controller to the dynamic system, where the aim is to learn the optimal control policy by observing certain indices without full knowledge of the system dynamics \cite{sutton1999reinforcement, bertsekas2019reinforcement}. Over the past decades, many effective
model-free RL algorithms have been proposed for a variety of optimal control problems associated with various dynamic systems, see, for example, \cite{bian2016adaptive, kiumarsi2017h, pang2021robust, jiang2024online, wang2024online} and recent survey \cite{kiumarsi2017optimal} for
details. However, most of the current RL methods are weakly extendible or lacking in theoretical assurances, thus necessitating further investigation of RL.

In order to further comprehend the intrinsic mechanisms of various RL algorithms and also to better utilize the optimization theory to make more progress in the field of RL, many investigators  have started to reconsider
 the linear quadratic regulator (LQR) problem, which, as a long-standing and well known archetypal problem in optimal control problems, captures the characteristics of RL despite the simplicity of the problem setup. For time invariant systems, a standard approach to solving LQR problem is to solve the Bellman equation or the algebraic Riccati equation (ARE) via dynamic programming \cite{bertsekas2012dynamic}. Studies on the design of model-free controllers for LQR by means of the methods of RL, such as temporal difference \cite{sutton1988learning}, Q-learning \cite{watkins1992q}  and so  on, can be found in \cite{bradtke1994adaptive, rizvi2018output, krauth2019finite, lai2023model}. With the development of convex analysis \cite{boyd2004convex}, LQR problem, for which the system dynamics are completely known, has   been extensively studied using  convex optimization techniques \cite{boyd1994linear,rami2000linear, yao2001stochastic}, for example,  \cite{yao2001stochastic} studied the duality of the stochastic linear quadratic (SLQ) problem using  semidefinite programming (SDP). 
 Recently, many researchers have reconsidered the LQR problem from the point of   optimization  by connecting the RL method with classical convex optimization theory, and some valuable results have been given according to the duality of LQR problem, see, for example \cite{lee2018primal,li2022model,farjadnasab2022model}. Specifically, based on  the results in \cite{yao2001stochastic},  \cite{lee2018primal}  established a novel primal-dual Q-learning framework for LQR according to Lagrangian duality theories, utilized the framework to design a model-free primal-dual Q-learning algorithm to solve the LQR problem, and further interpreted the duality results from the Q-learning perspective.
 \cite{li2022model} studied the model-free design of SLQ controllers from the viewpoint of primal-dual optimization. The presented optimization formulation is different from that of \cite{lee2018primal} owing to the effect of random additive noise, and this difference  also brings some difficulties in the equivalence descriptions of the primal and dual  problems, respectively. In the work of both \cite{lee2018primal}  and \cite{li2022model}, proving that strong duality holds is a challenging task, and both of their proposed primal-dual Q-learning algorithms require that the initial policy  to be stable and requires the collection of a set of trajectories whose initial states are linearly independent. Unlike the work in \cite{lee2018primal},    \cite{farjadnasab2022model} exploited the properties of optimization frameworks and  proposed a new model-free algorithm on the basis of SDP for learning Q-function. The algorithm is characterized by high sampling efficiency, intrinsic robustness to model uncertainty, and does not need to provide an initially  stabilizing controller.
 
Inspired by the above works, this paper investigates the model-free SLQ design by SDP. The SLQ problem is first equivalently re-represented as a non-convex optimization problem. The nonconvexity of the primal problem leads to the nontrivial task of proving its strong duality, as done in \cite{li2022model}. Instead of proving strong duality, we start directly from the dual problem of the SLQ problem, use the KKT conditions and the convexity of the dual problem  to find out the relationship between the optimal point of the dual problem and the parameters of the Q-function, which can be directly used to  derive a new model-based SDP algorithm to obtain the controller gain. Finally, a model-free implementation of the controller design is given based on Monte-Carlo method.

 The proposed model-free  SDP approach has the following characteristics that are worth noting:

\begin{itemize}
	\item  It does not need to provide an initially  stabilizing  controller.
	\item  It does not need hyper-parameter adjustment.
	\item   The algorithm implementation procedure is done in a single step, not iteratively.
	\item   It only needs to collect state and input data over a short time horizon.	
\end{itemize} 

The remainder  is structured below. In Section 2 the SLQ problem of interest is presented with some preliminary preparations. In Section 3, the SLQ problem is reformulated as an optimization problem and a novel model-based SDP approach  for SLQ controller design is proposed based on the relationship between the dual problem and the Q-function. In Section 4, a model-free  implementation of model-based SDP approach  is presented. In Section 5, the proposed model-free algorithm is verified by applying it to different systems, which include both stable and unstable dynamic systems. Some conclusions are given in Section 6.

For convenience, we adopt the following notations:\par 
$\mathcal{Z}_{+} $: the collection  of nonnegative integers;  $\mathcal{R}^{l}$: the collection  of $l$ dimensional vectors; $\mathcal{R}^{l \times m}$: the collection  of $l \times m$ real matrices.  $I_{l}$: the  $l \times l$ identity matrix, $\pmb{0}$: the zero vector or matrix with the appropriate dimension. $M^{\top}$:  the transpose of a  vector or matrix $M$; $\operatorname{tr}\left( M\right)$: the trace of a square matrix $M$; $\rho\left( M\right)$: the spectral radius of  $M$; $\lambda_{\text {max}}\left(M \right)\left( \lambda_{\text {min}}\left(M \right)\right) $:  the maximum (minimum) eigenvalues of  $M$. $\mathbb{R}\left(M\right)$: the range space
of $M$;  $\mathbb{N}\left(M\right)$: the nullspace
of $M$.  For vectors $\pmb{x}$ and $\pmb{y}$, $\operatorname{col}\left(\pmb{x}, \pmb{y}\right)\triangleq\left[\begin{array}{c}\pmb{x}\\ \pmb{y}\end{array}\right]$. The direct sum of matrices $M_{1}$ and $ M_{2}$: $M_{1}\oplus M_{2} \triangleq\begin{bmatrix}
	M_{1}& \pmb{0}\\
	\pmb{0}&M_{2}
\end{bmatrix} $. $\mathcal{S}^{l}$, $\mathcal{S}^{l}_{+}$ and $\mathcal{S}^{l}_{++}$ are the collections of all $l \times l$ symmetric, symmetric positive semidefinite and symmetric positive definite matrices,  respectively; we write  $M \succeq \pmb{0}$, (resp. $M \succ \pmb{0}$) if $M \in \mathcal{S}^{l}_{+}$, (resp. $M \in \mathcal{S}^{l}_{++}$).

\section{Problem formulation and preliminaries}





Consider a class of discrete-time linear time-invariant systems subject to additive noise
 as:
\begin{equation}\label{eq1}
	\pmb{x}_{k+1}=A \pmb{x}_{k}+B \pmb{u}_{k}+\pmb{w}_{k}\,,
\end{equation}
where  $k \in \mathcal{Z}_{+}$ is the discrete-time index,
$\pmb{x}_{k} \in \mathcal{R}^{n}$ is
the system state, $\pmb{u}_{k} \in \mathcal{R}^{m}$ is
the control input, $\pmb{w}_{k} \in \mathcal{R}^{n}$ is
the system additive noise defined on a
given complete probability space $\left(\Omega, \mathscr{F},\mathcal{P}\right)$, $A \in \mathcal{R}^{n \times n}$ and $B \in \mathcal{R}^{n \times m}$ are the system coefficient matrices. The initial state $\pmb{x}_{0}$ is assumed to be a random variable. For convenience, it
is further assumed that the following Assumption 1 holds.
\begin{Assumption}\rm  \label{A1}
	\begin{itemize} 
		\item[(\romannumeral1)]	$\pmb{x}_{0} \sim \mathcal{N}_{n}( \pmb{\mu}_{0},\Sigma_{0})$, where  $\Sigma_{0}  \succ \pmb{0} \,;$	 
		\item[(\romannumeral2)]  The sequence of random vectors  $\left\{\pmb{w}_{k}, k \in \mathcal{Z}_{+} \right\}$ is independent and identically distributed, and furthermore,  $\pmb{w}_{k} \sim \mathcal{N}_{n}\left(\pmb{0}, \Sigma\right)$  for each $k \in \mathcal{Z}_{+}$,  where  $ \Sigma \succeq \pmb{0} \,; $	
		\item[(\romannumeral3)]  $\pmb{x}_{0}$ is independent of $\left\{\pmb{w}_{k}, k \in \mathcal{Z}_{+} \right\}\,.$
	\end{itemize}	 
\end{Assumption}

\begin{definition}\rm \cite{kubrusly1985mean}
	System~(\ref{eq1}) with $\pmb{u}_{k} \equiv \pmb{0}$ is
	said mean square stable (MSS) if  for
	any  $\pmb{x}_{0}$ and  $\left\{\pmb{w}_{k}, k \in \mathcal{Z}_{+} \right\}$
	satisfying Assumption~\ref{A1}, there exist $ \pmb{\mu} \in \mathcal{R}^{n}$ and $X \in \mathcal{R}^{n \times n}$,  which are independent of $\pmb{x}_{0}$,
	such that 
	\begin{itemize} 
		\item[(\romannumeral1)]$\lim _{\substack{{k \rightarrow \infty}}} \|\mathbb{E}\left( \pmb{x}_{k}\right) -\pmb{\mu}\|=0$,
		\item[(\romannumeral2)]$\lim _{\substack{{k \rightarrow \infty}}} \|\mathbb{E}\left( \pmb{x}_{k}\pmb{x}_{k}^{\top}\right) -X\|=0$.
	\end{itemize}	
\end{definition}
The following lemma gives  sufficient and necessary conditions under which the system is  MSS.
\begin{lem}\rm \label{lem1} \cite{kubrusly1985mean} \cite{lai2023model} \cite{li2022model}
	
	The following statements are equivalent:
	\begin{itemize} 
		\item[(\romannumeral1)]	System~(\ref{eq1}) with $\pmb{u}_{k} \equiv \pmb{0}$ is	MSS;	 
		\item[(\romannumeral2)] $\rho\left( A\right)<1 $;
		\item[(\romannumeral3)] when $r > 1 - \frac{\lambda_{\text {min}}\left(Z \right) }{\lambda_{\text {max}}\left(A^{\top}SA \right)}$, for each given $Z \succ \pmb{0}$, there exists a unique
		$S \succ \pmb{0}$, such that $r A^{\top}SA+Z=S$.
	\end{itemize}	
\end{lem}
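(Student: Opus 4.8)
The plan is to establish the three equivalences through the moment dynamics of the input-free closed loop and the classical discrete-time Lyapunov theory. First I would set $\pmb{\mu}_k=\mathbb{E}(\pmb{x}_k)$ and $X_k=\mathbb{E}(\pmb{x}_k\pmb{x}_k^{\top})$ for the system $\pmb{x}_{k+1}=A\pmb{x}_k+\pmb{w}_k$. Taking expectations and using items (\romannumeral2)--(\romannumeral3) of Assumption~\ref{A1} (so that $\mathbb{E}(\pmb{w}_k)=\pmb{0}$, $\mathbb{E}(\pmb{w}_k\pmb{w}_k^{\top})=\Sigma$, and $\pmb{w}_k$ is independent of $\pmb{x}_k$), I obtain the recursions $\pmb{\mu}_{k+1}=A\pmb{\mu}_k$ and $X_{k+1}=AX_kA^{\top}+\Sigma$, whose closed forms are $\pmb{\mu}_k=A^k\pmb{\mu}_0$ and $X_k=A^kX_0(A^{\top})^k+\sum_{j=0}^{k-1}A^j\Sigma(A^{\top})^j$.

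For (\romannumeral1)$\Leftrightarrow$(\romannumeral2), the mean sequence $\pmb{\mu}_k=A^k\pmb{\mu}_0$ converges to a limit independent of $\pmb{x}_0$ for every $\pmb{\mu}_0$ precisely when $A^k\to\pmb{0}$, i.e. when $\rho(A)<1$. Vectorizing the second-moment recursion gives $\operatorname{vec}(X_{k+1})=(A\otimes A)\operatorname{vec}(X_k)+\operatorname{vec}(\Sigma)$, an affine iteration whose unique fixed point is approached from every $X_0$ iff $\rho(A\otimes A)=\rho(A)^2<1$. Both reduce to $\rho(A)<1$; conversely $\rho(A)<1$ forces the mean to converge to $\pmb{0}$ and the second moment to the unique solution of $X=AXA^{\top}+\Sigma$, so MSS holds exactly when $\rho(A)<1$.

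For (\romannumeral2)$\Leftrightarrow$(\romannumeral3), the central observation is the identity $S-A^{\top}SA=(r-1)A^{\top}SA+Z$, read off directly from $rA^{\top}SA+Z=S$. For (\romannumeral3)$\Rightarrow$(\romannumeral2), the threshold $r>1-\lambda_{\min}(Z)/\lambda_{\max}(A^{\top}SA)$ is equivalent to $\lambda_{\min}(Z)>(1-r)\lambda_{\max}(A^{\top}SA)$, which renders $(r-1)A^{\top}SA+Z$ positive definite; hence $S-A^{\top}SA\succ\pmb{0}$ with $S\succ\pmb{0}$, and the discrete Lyapunov inequality yields $\rho(A)<1$. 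For (\romannumeral2)$\Rightarrow$(\romannumeral3), setting $\hat A=\sqrt{r}A$ recasts the equation as the Stein equation $S=\hat A^{\top}S\hat A+Z$; since $\rho(A)<1$ keeps $\rho(\hat A)=\sqrt{r}\rho(A)<1$ over the admissible range of $r$, the Neumann-type series $S=\sum_{j=0}^{\infty}r^j(A^{\top})^jZA^j$ converges to the unique solution, positive definite because $Z\succ\pmb{0}$, and one verifies that this $S$ meets the stated threshold.

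I expect the main obstacle to be the self-referential nature of statement~(\romannumeral3): the admissibility threshold for $r$ itself contains the unknown solution $S$. The argument must therefore be read as asserting that the solution $S$ of $rA^{\top}SA+Z=S$ exists, is unique and positive definite, and simultaneously satisfies the inequality, so that the Lyapunov step closes. Care is also needed in the degenerate case $\lambda_{\max}(A^{\top}SA)=0$, where the threshold is vacuously satisfied, $A^{\top}SA=\pmb{0}$ forces $A=\pmb{0}$ and $S=Z\succ\pmb{0}$, giving $\rho(A)=0<1$ trivially, and in invoking the standard discrete-time Lyapunov theorem that $\rho(A)<1$ is equivalent to the existence of some $S\succ\pmb{0}$ with $S-A^{\top}SA\succ\pmb{0}$.
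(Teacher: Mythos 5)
The paper offers no proof of this lemma at all --- it is quoted directly from \cite{kubrusly1985mean}, \cite{lai2023model} and \cite{li2022model} --- so there is no in-paper argument to measure yours against; what you have written is the standard textbook route (moment recursions and vectorization for (\romannumeral1)$\Leftrightarrow$(\romannumeral2), the Stein/Lyapunov correspondence under the rescaling $\hat A=\sqrt{r}A$ for (\romannumeral2)$\Leftrightarrow$(\romannumeral3)), and it is essentially sound, including your treatment of the degenerate case $A^{\top}SA=\pmb{0}$. One assertion, however, is wrong as stated: in (\romannumeral2)$\Rightarrow$(\romannumeral3) you claim that ``one verifies that this $S$ meets the stated threshold,'' but the series solution $S=\sum_{j\ge 0}r^j(A^{\top})^jZA^j$ gives $S-A^{\top}SA=Z-(1-r)A^{\top}SA$, and this need not be positive definite for every $r<1$. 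For example, with $A=\left[\begin{smallmatrix}0&10\\0&0\end{smallmatrix}\right]$, $Z=I_2$ and $r=0.5$ one gets $S=\operatorname{diag}(1,51)\succ\pmb{0}$ and $A^{\top}SA=\operatorname{diag}(0,100)$, so the threshold $r>1-1/100$ fails even though $\rho(A)=0$. The resolution is the one you half-identify yourself: the threshold in (\romannumeral3) is a \emph{hypothesis} on $r$, not a property to be re-derived from the constructed $S$, so you should delete that verification claim rather than try to prove it. Relatedly, your phrase ``over the admissible range of $r$'' papers over the fact that the lemma places no upper bound on $r$, and for $r\ge\rho(A)^{-2}$ the equation has no positive definite solution even when $\rho(A)<1$; the equivalence is only coherent under the paper's standing convention $r\in(0,1)$, which you should invoke explicitly so that $\rho(\sqrt{r}A)<1$ and the Neumann series converges. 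With those two repairs the argument is complete.
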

In view of Lemma~\ref{lem1}, we assume  that 	$r \in (0,1) $ is close to 1 throughout the paper.

\begin{definition}\rm 
	The state feedback control  $\pmb{u}_{k}=L\pmb{x}_{k}$ is called admissible, if  $	\pmb{x}_{k+1}=\left(A+BL\right) \pmb{x}_{k}+\pmb{w}_{k}$ is MSS.  In this case,   $L \in \mathcal{R}^{m \times n} $ is called a
	stabilizing gain of system~(\ref{eq1}), and the set of all stabilizing  gains is denoted as $\mathcal{L}$.  		
\end{definition}

Consequently, the cost functional can be
defined as 
\begin{equation}\label{j1}
	J(L,\pmb{x}_{0})\triangleq\sum_{k=0}^{\infty}r^{k} \mathbb{E}\left[\begin{array}{l}
		\pmb{x}_{k} \\
		L\pmb{x}_{k}
	\end{array}\right]^{\top}W\left[\begin{array}{l}
		\pmb{x}_{k} \\
		L\pmb{x}_{k}
	\end{array}\right]\,,
\end{equation}
where $r \in (0,1) $ is the discount factor and $W \triangleq 
	Q\oplus R \succeq \pmb{0}$ is a block-diagonal matrix   
consisting of the  weighting matrices $Q\in \mathcal{S}^{n}_{+}$ and
$R \in \mathcal{S}^{m}_{++}$, respectively. 

In this
paper, we consider the infinite-horizon SLQ problem. 
\begin{Problem}\rm ( SLQ problem )\label{p1}\par	
	Find an optimal feedback gain $L^{*} \in \mathcal{L}$, if it exists,  that  minimize the cost functional~(\ref{j1}). That is, 
	\begin{equation*}
		L^{*} \triangleq
		\underset{L \in \mathcal{L}}{\arg \min }\ J(L,\pmb{x}_{0})\,.
	\end{equation*}  		
\end{Problem} 

The following assumptions are necessary to ensure that an optimal feedback gain $L^{*}$ exists.
\begin{Assumption}\rm \label{A2}
	Assume that 
	\begin{itemize} 
		\item[(\romannumeral1)]	$(A, B)$ is stabilizable;
		\item[(\romannumeral2)]  $(A, \sqrt{Q} )$ is detectable.
	\end{itemize}
\end{Assumption}
Under Assumption~\ref{A2},    the optimal value of $\underset{L \in \mathcal{L}}{ \inf }\ J(L,\pmb{x}_{0})$ exists
and is attained, and the corresponding optimal cost $J\left(L^{*},\pmb{x}_{0}\right) $, which is  abbreviated as $ J^{*}$, is given by \cite{lai2023model} 	
\begin{equation*}
	J^{*}=\mathbb{E}\left( \pmb{x}_{0}^{\top} P^{*} \pmb{x}_{0} \right) +\frac{r}{1-r}\operatorname{tr}\left(P^{*} \Sigma \right)\,,
\end{equation*}
where $P^{*}\succeq \pmb{0}$ is the unique solution to the following discrete-time algebraic Riccati equation (DARE):
\begin{equation}\label{are1}
	P= Q+r A^{\top} PA-r^{2}	A^{\top} P B \left( 
	R+r B^{\top} PB\right) ^{-1}	B^{\top} P A\,.
\end{equation}  
The  corresponding optimal control gain $L^{*}$ for the SLQ problem is given by
\begin{equation*}
	L^{*}=-r \left( 
	R+r B^{\top} P^{*}B\right) ^{-1}	B^{\top} P^{*} A\,.
\end{equation*}

From the above, it is easy to conclude that the SLQ problem can be solved with knowing  the system dynamics $A$ and $B$.
The following Q-learning provides a model-free design approach to solve the SLQ problem.
Define the Q-function as (see \cite{lai2023model} for more details)
\begin{equation*}
	Q\left(\pmb{x}_{k},\pmb{u}_{k}\right) \triangleq \mathbb{E}\left(U(\pmb{x}_{k}, \pmb{u}_{k})\right) +r		V\left(L,\pmb{x}_{k+1}\right)\,,
\end{equation*}
where the value function
\begin{equation*}
	V\left(L,\pmb{x}_{k}\right) \triangleq \mathbb{E}\sum_{i=k}^{\infty}r^{i-k} U (\pmb{x}_{i}, L\pmb{x}_{i})\,,
\end{equation*}
with $	U(\pmb{x}_{i}, \pmb{u}_{i})\triangleq \pmb{x}_{i}^{\top} Q  \pmb{x}_{i}+\pmb{u}_{i}^{\top} R  \pmb{u}_{i}$.

The optimal value function produced by  the optimal admissible control policy is denoted  as 
\begin{equation*}
	V^{*}\left(\pmb{x}_{k}\right)\triangleq V\left(L^{*},\pmb{x}_{k}\right)=\min _{L \in \mathcal{L}}  V\left(L,\pmb{x}_{k}\right)\,.
\end{equation*}
For the case of SLQ, we have
\begin{lem}\rm \label{lem3} \cite{lai2023model}
	Define  the optimal
	Q-function 	as $Q^{*}\left(\pmb{x}_{k},\pmb{u}_{k}\right) \triangleq \mathbb{E}\left(U(\pmb{x}_{k}, \pmb{u}_{k})\right) +r	V^{*}\left(\pmb{x}_{k+1}\right) $, then  $Q^{*}\left(\pmb{x}_{k},\pmb{u}_{k}\right) $ can be expressed as 
	\begin{equation}\label{H}
		Q^{*}\left(\pmb{x}_{k},\pmb{u}_{k}\right)= \mathbb{E}\left[  \left[ \pmb{x}_{k}^{\top},\pmb{u}_{k}^{\top}\right] H^{*}\left[ \pmb{x}_{k}^{\top},\pmb{u}_{k}^{\top}\right]^{\top}\right]  +\frac{r}{1-r}\operatorname{tr}\left(P^{*} \Sigma \right)\,,
	\end{equation}
	where $P^{*}\succeq \pmb{0}$ is the unique solution to DARE~(\ref{are1}), $H^{*}\triangleq\begin{bmatrix}
			H^{*}_{11}&H^{*}_{12}\\
			H^{*}_{21}&H^{*}_{22}
		\end{bmatrix}$
	with $H^{*}_{11}=Q+r A^{\top} P^{*}A$, $H^{*}_{12}=r A^{\top} P^{*} B=\left( H^{*}_{21}\right) ^{\top}$, $H^{*}_{22}=R+r B^{\top} P^{*} B$.	
	Furthermore, the optimal control is derived from
	\begin{equation*}
		u^{*}_{k}  =L^{*}\pmb{x}_{k}=\underset{\pmb{u}_{k}}{\arg \min }\ Q^{*}(\pmb{x}_{k}, \pmb{u}_{k}) 
	\end{equation*}
	with the optimal control gain
	\begin{equation} \label{lstar}
		L^{*}=-\left( H^{*}_{22}\right) ^{-1} \left( H^{*}_{12}\right) ^{\top}\,.	
	\end{equation}
\end{lem}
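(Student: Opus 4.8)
The plan is to first pin down the closed form of the optimal value function $V^{*}$, then substitute it into the definition of $Q^{*}$, carry out a completion-of-the-quadratic-form computation, and finally minimise the resulting quadratic in $\pmb{u}_{k}$. For Step~1, I would show that under Assumption~\ref{A2} the optimal value function is
\begin{equation*}
	V^{*}(\pmb{x}_{k}) = \mathbb{E}\left( \pmb{x}_{k}^{\top} P^{*} \pmb{x}_{k}\right) + \frac{r}{1-r}\operatorname{tr}\left(P^{*}\Sigma\right),
\end{equation*}
with $P^{*}$ the unique solution of DARE~(\ref{are1}). This is the stationary, infinite-horizon analogue of the $J^{*}$ formula already recorded above; it can be verified by inserting the quadratic-plus-constant ansatz $V^{*}(\pmb{x}_{k})=\pmb{x}_{k}^{\top}P^{*}\pmb{x}_{k}+c$ into the Bellman optimality equation $V^{*}(\pmb{x}_{k})=\min_{\pmb{u}_{k}}\{U(\pmb{x}_{k},\pmb{u}_{k})+r\,\mathbb{E}_{\pmb{w}_{k}}[V^{*}(\pmb{x}_{k+1})]\}$. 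Completing the square in $\pmb{u}_{k}$ reproduces the DARE for the quadratic part, while matching the constant terms yields $c=r\operatorname{tr}(P^{*}\Sigma)+rc$, i.e. $c=\frac{r}{1-r}\operatorname{tr}(P^{*}\Sigma)$.

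For Step~2, I would insert this expression into $Q^{*}(\pmb{x}_{k},\pmb{u}_{k})=\mathbb{E}(U(\pmb{x}_{k},\pmb{u}_{k}))+rV^{*}(\pmb{x}_{k+1})$ and replace $\pmb{x}_{k+1}$ by $A\pmb{x}_{k}+B\pmb{u}_{k}+\pmb{w}_{k}$. Since $\pmb{w}_{k}$ is zero-mean with covariance $\Sigma$ and independent of $(\pmb{x}_{k},\pmb{u}_{k})$, the cross terms vanish under expectation and the noise contributes exactly $r\operatorname{tr}(P^{*}\Sigma)$; combining this with $r\cdot\frac{r}{1-r}\operatorname{tr}(P^{*}\Sigma)$ collapses the constant back to $\frac{r}{1-r}\operatorname{tr}(P^{*}\Sigma)$. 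The remaining quadratic terms
\begin{equation*}
	\pmb{x}_{k}^{\top}Q\pmb{x}_{k}+\pmb{u}_{k}^{\top}R\pmb{u}_{k}+r(A\pmb{x}_{k}+B\pmb{u}_{k})^{\top}P^{*}(A\pmb{x}_{k}+B\pmb{u}_{k})
\end{equation*}
group, by reading off the coefficients of $\pmb{x}_{k}^{\top}(\cdot)\pmb{x}_{k}$, $\pmb{x}_{k}^{\top}(\cdot)\pmb{u}_{k}$ and $\pmb{u}_{k}^{\top}(\cdot)\pmb{u}_{k}$, into the block form $[\pmb{x}_{k}^{\top},\pmb{u}_{k}^{\top}]H^{*}[\pmb{x}_{k}^{\top},\pmb{u}_{k}^{\top}]^{\top}$ with exactly the stated blocks $H^{*}_{11}=Q+rA^{\top}P^{*}A$, $H^{*}_{12}=rA^{\top}P^{*}B=(H^{*}_{21})^{\top}$ and $H^{*}_{22}=R+rB^{\top}P^{*}B$, which establishes~(\ref{H}).

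For Step~3, I would minimise~(\ref{H}) over $\pmb{u}_{k}$. Because $R\succ\pmb{0}$ and $P^{*}\succeq\pmb{0}$, the block $H^{*}_{22}=R+rB^{\top}P^{*}B\succ\pmb{0}$, so $Q^{*}$ is strictly convex in $\pmb{u}_{k}$; setting the gradient $2H^{*}_{21}\pmb{x}_{k}+2H^{*}_{22}\pmb{u}_{k}$ to zero gives the unique minimiser $\pmb{u}_{k}^{*}=-(H^{*}_{22})^{-1}(H^{*}_{12})^{\top}\pmb{x}_{k}$, i.e.~(\ref{lstar}). A consistency check substituting the block definitions recovers $L^{*}=-r(R+rB^{\top}P^{*}B)^{-1}B^{\top}P^{*}A$, matching the DARE-based gain stated earlier. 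The step I expect to be the main obstacle is Step~1: rigorously justifying that the quadratic-plus-constant ansatz is the \emph{optimal} value function rather than merely a fixed point of the Bellman operator. This requires convergence of the discounted infinite sum defining $V$, the mean-square stability (admissibility) of the gain generated by the DARE so that $V^{*}$ is finite and attained, and care with the conditional-expectation bookkeeping over $\pmb{w}_{k}$; once $V^{*}$ is in hand, Steps~2 and~3 reduce to routine algebra.
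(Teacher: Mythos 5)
Your derivation is correct: the value-function ansatz with constant $c=\frac{r}{1-r}\operatorname{tr}(P^{*}\Sigma)$, the collapse of the noise contribution $r\operatorname{tr}(P^{*}\Sigma)+\frac{r^{2}}{1-r}\operatorname{tr}(P^{*}\Sigma)=\frac{r}{1-r}\operatorname{tr}(P^{*}\Sigma)$, the block identification of $H^{*}$, and the minimisation using $H^{*}_{22}\succ\pmb{0}$ all check out, and you rightly flag the verification step (that the Bellman fixed point is the optimal value function, which rests on Assumption~\ref{A2} and mean-square admissibility of the DARE gain) as the only nontrivial part. The paper itself offers no proof of this lemma --- it is imported verbatim from the cited reference --- so there is nothing to compare against; your argument is the standard dynamic-programming derivation that the reference uses.
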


\section{ Model-based design of  SLQ control via SDP }

In Section 3.1, the SLQ problem is first reformulated as a non-convex optimization problem, and then in Section 3.2, a non-iterative SDP optimization method is  developed to find the optimal Q-function parameters from the dual of the SLQ problem. Finding the optimal Q-function means finding the optimal control policy for the SLQ problem.
\subsection{Problem reformulation} 
By introducing the augmented system,  which is described by
\begin{equation}
	\pmb{z}_{k+1}=A_{L} \pmb{z}_{k}+\bar{L} \pmb{w}_{k} \,,\label{au}
\end{equation} 
where the augmented state vector $\pmb{z}_{k} \triangleq \operatorname{col}\left( \pmb{x}_{k},\pmb{u}_{k}\right) $ with the initial augmented state vector $\pmb{z}_{0} \triangleq \operatorname{col}\left(\pmb{x}_{0},L\pmb{x}_{0}\right) $, $A_{L}\triangleq \left[\begin{array}{cc}A & B \\ L A & L B\end{array}\right]  $, $\bar{L} \triangleq \left[\begin{array}{c}I_{n} \\L\end{array}\right]  $, we can obtain  the alternative formulation of the Problem~\ref{p1}  (SLQ problem) and described
by the following Problem~\ref{p2}.

\begin{Problem}\rm \label{p2} (Primal Problem):
	Non-convex optimization with optimization variables $L\in \mathcal{R}^{m \times n}$ and $S \in \mathcal{S}^{n+m}$. 
	\begin{align}
		J_p \triangleq & \ \underset{L\in \mathcal{R}^{m \times n}\,,\ S \in \mathcal{S}^{n+m} }{\min} \ \operatorname{tr}(W S)\,, \notag\\
		\text { s.t. } \ & S \succ \pmb{0},\notag\\
		&r A_{L} S A_{L}^{\top}+\bar{L} X_{0}\bar{L}^{\top}+\frac{r}{1-r}\bar{L}\Sigma \bar{L}^{\top}=S \,.  \label{eqp32}	
	\end{align}	
\end{Problem}
Note that Problem~\ref{p2} is a non-convex optimization problem because constraint~(\ref{eqp32}) is not linear. We first prove that Problem~\ref{p2} is a non-convex optimization equivalent to Problem~\ref{p1}.
\begin{Proposition}\rm
	Problem~\ref{p2} is
	equivalent to Problem~\ref{p1} in a meaning that $J_p =J^{*}$ and $L_p=L^{*}$,  where  $(S_p, L_p)$ is the optimal point of Problem~\ref{p2}. In addition,	 $(S_p, L_p)$ is unique.
\end{Proposition}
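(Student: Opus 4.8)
The plan is to prove the claimed equivalence by exhibiting an explicit, objective-preserving correspondence between the feasible pairs $(L,S)$ of Problem~\ref{p2} and the admissible gains $L\in\mathcal{L}$, and then matching the two minimizations. First I would fix an admissible $L\in\mathcal{L}$ and track the augmented second moment. Along the closed loop $\pmb{z}_{k}=\operatorname{col}(\pmb{x}_{k},L\pmb{x}_{k})$ obeys the augmented recursion~(\ref{au}); since $\pmb{w}_{k}$ is zero-mean and independent of $\pmb{z}_{k}$, the moment $M_{k}\triangleq\mathbb{E}(\pmb{z}_{k}\pmb{z}_{k}^{\top})$ satisfies $M_{k+1}=A_{L}M_{k}A_{L}^{\top}+\bar{L}\Sigma\bar{L}^{\top}$ with $M_{0}=\bar{L}X_{0}\bar{L}^{\top}$, where $X_{0}=\mathbb{E}(\pmb{x}_{0}\pmb{x}_{0}^{\top})=\Sigma_{0}+\pmb{\mu}_{0}\pmb{\mu}_{0}^{\top}\succ\pmb{0}$. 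As $L$ is stabilizing and $r\in(0,1)$, the discounted sum $S(L)\triangleq\sum_{k=0}^{\infty}r^{k}M_{k}$ converges; summing the recursion against $r^{k}$ telescopes exactly into constraint~(\ref{eqp32}). Moreover $\bar{L}^{\top}W\bar{L}=Q+L^{\top}RL$, so $\operatorname{tr}(WS(L))=\sum_{k}r^{k}\mathbb{E}\big(\pmb{x}_{k}^{\top}(Q+L^{\top}RL)\pmb{x}_{k}\big)=J(L,\pmb{x}_{0})$ by~(\ref{j1}). Hence every admissible $L$ produces a feasible pair with matching objective.

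For the converse I would read~(\ref{eqp32}) as a Stein equation $S=rA_{L}SA_{L}^{\top}+C$ with $C=\bar{L}\big(X_{0}+\tfrac{r}{1-r}\Sigma\big)\bar{L}^{\top}\succeq\pmb{0}$. The spectral fact driving this direction is the factorization $A_{L}=\bar{L}\,[A\ B]$, so the nonzero eigenvalues of $A_{L}$ coincide with those of $A+BL$ while the remaining $m$ are zero; combined with the semidefinite constraint this is meant to force $\rho(\sqrt{r}A_{L})<1$, hence $r\lambda_{i}\lambda_{j}\neq1$ for all eigenvalue pairs of $A_{L}$, which in turn makes the Stein equation uniquely solvable. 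Its unique solution is therefore the $S(L)$ constructed above, so again $\operatorname{tr}(WS)=J(L,\pmb{x}_{0})$ equals the discounted cost of $L$.

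Putting the two directions together, the objectives agree on corresponding points, and the discounted problem is minimized by the DARE gain $L^{*}$; thus $J_{p}=\min_{L\in\mathcal{L}}J(L,\pmb{x}_{0})=J^{*}$ and $L_{p}=L^{*}$. For uniqueness, Assumption~\ref{A2} guarantees that DARE~(\ref{are1}) has a unique stabilizing solution $P^{*}\succeq\pmb{0}$, hence a unique $L^{*}$; and with $L=L^{*}$ (which lies in $\mathcal{L}$ since $r$ is close to $1$) the Stein equation has the single solution $S_{p}=S(L^{*})$, so the optimal pair $(S_{p},L_{p})$ is unique.

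The hard part will be the converse, namely showing that the semidefinite-plus-Lyapunov constraint actually pins $L$ down to an admissible gain and its associated $S(L)$. This is complicated on two fronts. The discount factor means the constraint controls $\rho(\sqrt{r}A_{L})$ rather than $\rho(A+BL)$ directly, so one only gets $\rho(A+BL)<1/\sqrt{r}$ and must invoke $r\approx1$ (and the detectability in Assumption~\ref{A2}, via Lemma~\ref{lem1}) to tie feasibility to $\mathcal{L}$ and to $L^{*}$. Equally delicate is the positivity requirement: because $C$ and every solution $S$ have range inside $\operatorname{range}(\bar{L})$, the physical $S(L)=\bar{L}\big(\sum_{k}r^{k}\mathbb{E}(\pmb{x}_{k}\pmb{x}_{k}^{\top})\big)\bar{L}^{\top}$ has rank $n$ and is only positive semidefinite, so the strict condition $S\succ\pmb{0}$ must be understood on $\operatorname{range}(\bar{L})$ (equivalently read as $S\succeq\pmb{0}$), and the Lyapunov/detectability argument should be run on that subspace.
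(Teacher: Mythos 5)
Your proposal follows essentially the same route as the paper's proof: construct $S(L)=\sum_{k\ge 0}r^{k}\mathbb{E}\left(\pmb{z}_{k}\pmb{z}_{k}^{\top}\right)$ from the augmented recursion~(\ref{au}), telescope the moment equation into constraint~(\ref{eqp32}), identify $\operatorname{tr}(WS(L))$ with $J(L,\pmb{x}_{0})$, and use $\rho(A_{L})=\rho(A+BL)$ together with Lemma~\ref{lem1} to trade the constraint $L\in\mathcal{L}$ for a positivity condition on $S$; your forward direction is exactly the paper's, and your uniqueness argument (via uniqueness of $L^{*}$) is also the same.

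The one place you genuinely diverge is the positivity constraint, and there you are right and the paper is not. The paper asserts $\bar{L}\left(X_{0}+\tfrac{r}{1-r}\Sigma\right)\bar{L}^{\top}\succ\pmb{0}$; since $\bar{L}\in\mathcal{R}^{(n+m)\times n}$ has rank $n<n+m$, this matrix --- and with it the solution $S(L)$, whose range lies in $\mathbb{R}(\bar{L})$ --- is only positive semidefinite, so Problem~\ref{p2} as literally stated ($S\succ\pmb{0}$) admits no feasible point and the paper's appeal to Lemma~\ref{lem1} at that step is not justified as written. Your proposed repair does work: writing $S=\bar{L}\Pi\bar{L}^{\top}$ and using $A_{L}=\bar{L}\left[A\ \ B\right]$ collapses~(\ref{eqp32}) to an $n\times n$ Stein equation for $A+BL$ with strictly positive definite forcing term $X_{0}+\tfrac{r}{1-r}\Sigma$, after which the Lemma~\ref{lem1} argument applies; alternatively, testing the $(n+m)$-dimensional equation against an eigenvector $v$ of $A_{L}^{\top}$ with $|\lambda|\ge 1/\sqrt{r}$ forces $\bar{L}^{\top}v=\pmb{0}$ and hence $\lambda=0$, a contradiction, so even $S\succeq\pmb{0}$ pins down $\rho(\sqrt{r}\,A_{L})<1$. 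Your remaining caveat --- that feasibility only yields $\rho(A+BL)<1/\sqrt{r}$ rather than $L\in\mathcal{L}$ --- is the same gap the paper absorbs into the self-referential lower bound on $r$ in Lemma~\ref{lem1}; neither treatment closes it cleanly, but you at least name it explicitly.
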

\begin{proof}
	Using the properties of matrix traces, we can reformulate the objective function of Problem 1 as 
	\begin{equation*}
		J\left(L, \pmb{x}_{0}\right)=	 \operatorname{tr}(W S)\,, 
	\end{equation*}
	where 
	\begin{equation} \label{rs}
		S \triangleq  \sum_{k=0}^{\infty}r^{k} \mathbb{E}\left[\begin{array}{l}
			\pmb{x}_{k} \\
			L	\pmb{x}_{k}
		\end{array}\right]\left[\begin{array}{l}
			\pmb{x}_{k} \\
			L	\pmb{x}_{k}
		\end{array}\right]^{\top}\,.	
	\end{equation} 
	From ~(\ref{au}), 	for $ k \geq 1$, we get 
	\begin{equation} \label{asv}
		\pmb{z}_{k}=A_{L}^{k} \pmb{z}_{0}+\sum_{j=0}^{k-1} A_{L}^{j} \bar{L} \pmb{w}_{k-j-1}\,.		
	\end{equation}
	Using Eq.~(\ref{asv}) and the item (\romannumeral2) in Assumption~\ref{A1},  for  $k \geq 1$ and any $L \in \mathcal{L}$, one can obtain that 
	\begin{equation*}
		\begin{aligned}
			& \mathbb{E}\left[\pmb{z}_{k} \pmb{z}_{k}^{\top}\right] \\
			=& \mathbb{E}\left[A_{L}^{k} \pmb{z}_{0} \left( \pmb{z}_{0}\right) ^{\top}\left(A_{L}^{k}\right)^{\top}\right] +\sum_{j=0}^{k-1} \mathbb{E}\left[ A_{L}^{j} \bar{L} \pmb{w}_{k-j-1} \pmb{w}_{k-j-1}^{\top} \bar{L}^{\top}\left(A_{L}^{j}\right)^{\top}\right]\\
			=& A_{L}^{k} \bar{L} \mathbb{E}\left[\pmb{x}_{0} \pmb{x}_{0}^{\top}\right] \bar{L}^{\top}\left(A_{L}^{k}\right)^{\top}+\sum_{j=0}^{k-1} A_{L}^{j} \bar{L} \Sigma \bar{L}^{\top}\left(A_{L}^{j}\right)^{\top}\,.
		\end{aligned}
	\end{equation*}	
	Denote $X_{0} \triangleq\mathbb{E}\left[\pmb{x}_{0} \pmb{x}_{0}^{\top}\right]=\Sigma_{0}+\pmb{\mu}_{0}\pmb{\mu}_{0}^{\top}$ in the sequel for brevity, thus, $S$ in~(\ref{rs}) becomes
	\begin{equation*}		
		S =\sum_{k=0}^{\infty} r^{k} A_{L}^{k} \bar{L}X_{0}\bar{L}^{\top}
		\left(A_{L}^{k}\right)^{\top} +\sum_{k=1}^{\infty} r^{k}\sum_{j=0}^{k-1}  A_{L}^{j} \bar{L} \Sigma \bar{L}^{\top}\left(A_{L}^{j}\right)^{\top}\,.			
	\end{equation*}
	By an algebraic manipulation, we can get that
	\begin{equation*}
		S-r A_{L} S A_{L}^{\top}=\bar{L} X_{0} \bar{L}^{\top}+\sum_{k=1}^{\infty} r ^{k} \bar{L}\Sigma \bar{L}^{\top} =\bar{L} X_{0} \bar{L}^{\top}+\frac{r}{1-r} \bar{L}\Sigma \bar{L}^{\top}\,,
	\end{equation*}
	which means that  $S$  satisfies~(\ref{eqp32}).
	
	From  $X_{0}\succ \pmb{0}  $, $ \Sigma \succeq \pmb{0}$ and  $0<r<10$, we get $X_{0} +\frac{r}{1-r} \Sigma \succ \pmb{0}  $, which leads to  $\bar{L} \left( X_{0} +\frac{r}{1-r} \Sigma \right) \bar{L}^{\top}\succ \pmb{0}  $. Then, when  $r>1-\frac{\lambda_{\min }\left(\bar{L} \left( X_{0} +\frac{r}{1-r} \Sigma \right) \bar{L}^{\top}\right)}{\lambda_{\max }\left(A_{L}^{\top} S A_{L}\right)}$,  it follows from Lemma~\ref{lem1}  that  $\rho\left(A_{L}\right)<1$ if and only if~(\ref{eqp32}) has a unique  solution $S \succ \pmb{0}$. We can obtain  from  Lemma~1 in \cite{lee2018primal} that  $\rho(A+B L)=\rho\left(A_{L}\right)$. Thus, the constraint $L \in \mathcal{L}$  in Problem~\ref{p1} can be replaced by $S \succ \pmb{0}$  that does not change its optimal point.

	If  $\left(S_{p}, L_{p}\right)$  is the optimal point of Problem~\ref{p2} and $J_{p}$  is  the related optimal value, then  $ S_{p} \succ \pmb{0}$  and  $\rho\left(A_{L_{p}}\right)<1$.  $ L_{p} \in \mathcal{L}$  is a feasible point of Problem~\ref{p1}, implying   $J_{p} \geq J^{*}$. If  $S_{p} $ is the unique solution of~(\ref{eqp32}) with  $L_{p}=L^{*} \in \mathcal{L}$,  then the corresponding objective function of  Problem~\ref{p2} is  $J_{p}=J^{*}$. Therefore,  one can obtain that $J_{p}=J^{*}$. The uniqueness of $\left(S_{p}, L_{p}\right)$  is demonstrated by the uniqueness of  $L^{*}$.
\end{proof}
\begin{Remark}\rm
	Although both   \cite{li2022model} and this paper investigate  SLQ problems with additive noise, the cost functional in  \cite{li2022model} is defined from a   sample from the initial state, whereas the cost functional in this paper is defined directly from the assumption that the initial state is a random variable,  and thus the cost functional defined in this paper is more general. Because of this, the equivalent optimization formulation of SLQ problem obtained in this paper is different from that of \cite{li2022model}.
\end{Remark}
\subsection{Model-based  design of  SLQ control via SDP} 	

In this section, we first represent the Lagrange dual problem associated with Problem~\ref{p2} as an SDP problem, where the affine objective function shows the dependence of the objective function on the initial state and additive noise, while the  LMI constraints are independent of the initial state and additive noise, which also shows the independence of the optimal point of the dual problem from the initial state and the additive noise.  In addition, the optimal point of the dual problem is related to the parameters of the optimal Q-function, which provides us with a way to find the parameters of the optimal Q-function. 

 We begin by describing the Lagrangian dual problem associated with Problem~\ref{p2} as a standard convex optimization problem.
\begin{theorem}\rm  
	The Lagrange  dual problem  associated with Problem~\ref{p2} can be expressed as a convex optimization problem indicated
	 by Problem~\ref{p3}.
	\begin{Problem}\rm	\label{p3}  (Dual Problem):
		Convex optimization with  optimization variables $M \in \mathcal{R}^{n \times n}$ and $F =\begin{bmatrix}
			F_{11}&F_{12}\\
			F_{12}^{\top}&F_{22}
		\end{bmatrix} \in \mathcal{S}^{n+m}$ with $F_{11}\in \mathcal{S}^{n}$, $F_{22}\in \mathcal{S}^{m}$,  and  $F_{12} \in \mathcal{R}^{n \times m}$. 	
		\begin{align}
			\underset{F,M }{\max} & \ \operatorname{tr}\left( \left( X_{0}+\frac{r}{1-r} \Sigma\right) M\right), \label{dp1}\\
			\text { s.t. } 	&r \left[
			A \quad  B	\right]^{\top}\left(F_{11}-F_{12} F_{22}^{-1} F_{12}^{\top}\right)\left[
			A \quad  B	\right]-F+W\succeq \pmb{0}\,, \label{dp1-2}\\
			&F_{22}\succ \pmb{0}\,, \label{dp1-3} \\
			& F_{11}-F_{12} F_{22}^{-1} F_{12}^{\top}-M\succeq \pmb{0}\,.\label{dp1-1}
		\end{align} 
	\end{Problem}
\end{theorem}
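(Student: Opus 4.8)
The plan is to form the Lagrangian of Problem~\ref{p2}, dualizing only the matrix equality~(\ref{eqp32}) with a symmetric multiplier $F\in\mathcal{S}^{n+m}$ and keeping $S\succ\pmb{0}$ as the domain over which the infimum is taken. The decisive structural observation is the factorization $A_{L}=\bar{L}\left[A\ \ B\right]$, which follows directly from the definitions of $A_{L}$ and $\bar{L}$. Writing $\Xi\triangleq X_{0}+\frac{r}{1-r}\Sigma\succ\pmb{0}$ (positive definiteness was already established in the proof of the Proposition) and repeatedly using the cyclic property of the trace, every occurrence of $L$ can be gathered into the single quantity $N(L)\triangleq\bar{L}^{\top}F\bar{L}$, so that the Lagrangian $\mathscr{L}(L,S,F)$ takes the form $\operatorname{tr}\!\left(\left(W-F+r\left[A\ \ B\right]^{\top}N(L)\left[A\ \ B\right]\right)S\right)+\operatorname{tr}\!\left(N(L)\,\Xi\right)$. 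I would make this reduction the first concrete step.

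Next I would compute the dual function $g(F)=\inf_{L,\,S\succ\pmb{0}}\mathscr{L}$ by minimizing over $S$ first. For symmetric $C$, the quantity $\inf_{S\succ\pmb{0}}\operatorname{tr}(CS)$ equals $0$ when $C\succeq\pmb{0}$ and $-\infty$ otherwise, so finiteness forces $C(L)\triangleq W-F+r\left[A\ \ B\right]^{\top}N(L)\left[A\ \ B\right]\succeq\pmb{0}$. Minimizing over $L$ is then handled by completing the square: provided $F_{22}\succ\pmb{0}$, one has $\bar{L}^{\top}F\bar{L}=(L-L^{\circ})^{\top}F_{22}(L-L^{\circ})+\Delta(F)$ with $L^{\circ}=-F_{22}^{-1}F_{12}^{\top}$ and $\Delta(F)\triangleq F_{11}-F_{12}F_{22}^{-1}F_{12}^{\top}$, whence $N(L)\succeq\Delta(F)$ with equality at $L^{\circ}$. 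By monotonicity of the congruence $X\mapsto\left[A\ \ B\right]^{\top}X\left[A\ \ B\right]$ and of $X\mapsto\operatorname{tr}(X\Xi)$ for $\Xi\succeq\pmb{0}$, the minimizer $L^{\circ}$ simultaneously governs constraint and objective: $C(L)\succeq\pmb{0}$ for all $L$ is equivalent to $C(L^{\circ})\succeq\pmb{0}$, i.e.\ to~(\ref{dp1-2}), and $\inf_{L}\operatorname{tr}(N(L)\Xi)=\operatorname{tr}(\Delta(F)\Xi)$. Hence $g(F)=\operatorname{tr}(\Delta(F)\Xi)$ on $\{F_{22}\succ\pmb{0}\}\cap\{(\ref{dp1-2})\}$ and $g(F)=-\infty$ elsewhere.

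With the dual function in hand, the Lagrange dual is $\max_{F}g(F)$ over $\{F_{22}\succ\pmb{0},\ (\ref{dp1-2})\}$. To bring it to the stated form I would introduce the slack variable $M$ and replace $\operatorname{tr}(\Delta(F)\Xi)$ by $\max_{M\preceq\Delta(F)}\operatorname{tr}(\Xi M)$; since $\Xi\succ\pmb{0}$, one has $\operatorname{tr}\!\left(\Xi(\Delta(F)-M)\right)\geq0$ whenever $\Delta(F)-M\succeq\pmb{0}$, so this maximum equals $\operatorname{tr}(\Delta(F)\Xi)$ and is attained at $M=\Delta(F)$. This yields precisely the objective~(\ref{dp1}) together with the constraints~(\ref{dp1-2}),~(\ref{dp1-3}),~(\ref{dp1-1}), establishing that Problem~\ref{p3} is the Lagrange dual. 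Convexity is then essentially automatic: the matrix-fractional map $(F_{12},F_{22})\mapsto F_{12}F_{22}^{-1}F_{12}^{\top}$ is jointly convex for $F_{22}\succ\pmb{0}$, so $\Delta(F)$ is matrix concave; the congruence in~(\ref{dp1-2}) and the trace against $\Xi\succeq\pmb{0}$ preserve concavity, so the feasible set is convex and $g$ is concave, and maximizing~(\ref{dp1}) is a convex program.

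The step I expect to be the main obstacle is the clean decoupling of the \emph{joint} infimum over the coupled pair $(L,S)$, because $L$ enters \emph{quadratically} through $N(L)=\bar{L}^{\top}F\bar{L}$ in both the $S$-linear term and the constant term, so the order of minimization and the monotonicity arguments must be justified carefully. In particular, one must rule out degenerate multipliers by showing that $g(F)=-\infty$ whenever $F_{22}$ is singular or indefinite, so that restricting to $F_{22}\succ\pmb{0}$ is without loss of generality; this requires exhibiting, in those cases, a direction in $L$ along which either $C(L)\not\succeq\pmb{0}$ or $\operatorname{tr}(N(L)\Xi)\to-\infty$. Verifying $\Xi\succ\pmb{0}$ and the monotonicity facts is routine and can be cited from the Proposition and standard matrix-analysis results.
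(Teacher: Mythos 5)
Your main line is essentially the paper's: dualize the equality constraint~(\ref{eqp32}) with a symmetric multiplier $F$, use $A_{L}=\bar{L}\left[A\ \ B\right]$ and the cyclic trace to collect every occurrence of $L$ into $\bar{L}^{\top}F\bar{L}$, eliminate $S$ first, minimize over $L$ at $L^{\circ}=-F_{22}^{-1}F_{12}^{\top}$, and introduce the slack $M$ via $\operatorname{tr}\left(\Xi\left(\Delta(F)-M\right)\right)\geq 0$. (The paper additionally carries a multiplier $F_{0}\succeq\pmb{0}$ for the cone constraint on $S$ and then eliminates it; that is equivalent to your choice of keeping $S$ in the cone.) Two of your refinements are actually cleaner than the paper's treatment: the completion-of-squares identity $N(L)=(L-L^{\circ})^{\top}F_{22}(L-L^{\circ})+\Delta(F)$ justifies in one stroke that requiring $C(L)\succeq\pmb{0}$ for all $L$ is the same as requiring it at $L^{\circ}$ (the paper only sets $\partial\mathscr{L}_{2}/\partial L=\pmb{0}$ and substitutes the stationary point into the constraint), and you actually verify convexity via joint convexity of the matrix-fractional function, which the paper asserts without proof.

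The one genuine gap is exactly where you predicted it, but the fix you propose is wrong in one subcase. You plan to justify the restriction~(\ref{dp1-3}) by showing $g(F)=-\infty$ whenever $F_{22}$ is singular or indefinite. That works when $F_{22}$ has a negative eigenvalue, and when $F_{22}\succeq\pmb{0}$ is singular with $\mathbb{R}\left(F_{12}^{\top}\right)\not\subseteq\mathbb{R}\left(F_{22}\right)$ (a null direction of $F_{22}$ then carries a nonzero linear term and $\operatorname{tr}(N(L)\Xi)\to-\infty$). But when $F_{22}\succeq\pmb{0}$ is singular and $\mathbb{R}\left(F_{12}^{\top}\right)\subseteq\mathbb{R}\left(F_{22}\right)$, the quadratic in $L$ is constant along the null directions of $F_{22}$, the infimum is attained, and $g(F)=\operatorname{tr}\left(\left(F_{11}-F_{12}F_{22}^{\dagger}F_{12}^{\top}\right)\Xi\right)$ is finite: there is no direction of divergence to exhibit, so your planned argument cannot close this case. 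The paper's resolution (its Situation~2) is not elimination but replication: replace $F_{22}$ by $\bar{F}_{22}=F_{22}+U_{m-q}\Lambda_{m-q}U_{m-q}^{\top}\succ\pmb{0}$, which leaves $F_{12}F_{22}^{\dagger}F_{12}^{\top}$, hence $\Delta$ and the objective, unchanged, so every such degenerate multiplier is matched by one with $F_{22}\succ\pmb{0}$ and the restriction costs nothing in the supremum. Without an argument of this kind you have not shown that Problem~\ref{p3} has the same optimal value as the Lagrange dual.
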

\begin{proof} 
In order to express the Lagrangian, the multipliers $F_{0}\in \mathcal{S}^{n+m}_+$ and $F\in \mathcal{S}^{n+m}$  are introduced, and the Lagrangian associated with Problem~\ref{p2} is obtained as	
		\begin{equation*}
		\begin{aligned}
			\mathscr{L}_{1}(S,L, F_{0}, F) \triangleq &\operatorname{tr}(W S)-\operatorname{tr}(F_{0} S) \\
			&+\operatorname{tr}\left(\left(r A_{L} S A_{L}^{\top}+\bar{L} X_{0}\bar{L}^{\top}+\frac{r}{1-r} \bar{L} \Sigma \bar{L}^{\top}-S\right) F\right) \\
			=&\operatorname{tr}\left(\left(r A_{L}^{\top} F A_{L}-F-F_{0}+W\right) S\right) \\
			&+\operatorname{tr}\left(\left( X_{0}+\frac{r}{1-r} \Sigma\right) \bar{L} ^{\top} F\bar{L}\right)\,.	
		\end{aligned}			
	\end{equation*}
	 The Lagrange dual function  is
	\begin{equation}\label{ld1}
		\mathcal{D}_{1}(F_{0}, F)\triangleq \underset{L} {\inf} \ \underset{S\succeq \pmb{0}} {\inf}  \mathscr{L}_{1}(S,L,F_{0}, F)\,,	
	\end{equation}
	and then, the Lagrange  dual problem  is 
	\begin{align}
		\underset{F_{0}, F }{\max} \ &  \mathcal{D}_{1}(F_{0}, F)\,,\label{d1}\\
		\text { s.t. } & 	F_{0}\succeq \pmb{0} \,.\label{d1-2}
	\end{align} 
	When  the Lagrangian  $ \mathscr{L}_{1}(S,L,F_{0}, F)$  is not bounded from below, then the value of the dual function $\mathcal{D}_{1}(F_{0}, F)$ is $-\infty$. Thus for the Lagrange dual problem~(\ref{d1}), the dual feasibility requires that $F_{0} \succeq \pmb{0}$ and that $\mathcal{D}_{1}(F_{0}, F)>-\infty$.  For given $F_{0}$ and $F$, since $S\succeq \pmb{0}$, the Lagrange dual function  $\mathcal{D}_{1}(F_{0}, F)$ which is given in~(\ref{ld1}) becomes
	\begin{equation}\label{df1}
		\begin{array}{l}
			\mathcal{D}_{1}(F_{0}, F)=  \left\{\begin{array}{ll}
				\underset{L  }{\inf}  \operatorname{tr}\left(\left( X_{0}+\frac{r}{1-r} \Sigma\right) \bar{L} ^{\top} F\bar{L}\right)\,, & \text { if } r A_{L}^{\top} F A_{L}-F-F_{0}+W \succeq \pmb{0}\\
				-\infty\,, & \text { otherwise. }
			\end{array}\right.
		\end{array}	
	\end{equation}
	Since the Lagrange multiplier $F_{0}$ only appears in the constraint condition of the Lagrange dual function~(\ref{df1}): $r A_{L}^{\top} F A_{L}-F-F_{0}+W \succeq \pmb{0}$, the dual feasibility is equivalent to	
	$r A_{L}^{\top} F A_{L}-F+W \succeq \pmb{0}$. Using the block-diagonal matrix representation of the Lagrange multiplier $F $ in Problem~\ref{p3}, the objective function  in~(\ref{df1}), which
	is denoted by $\mathscr{L}_{2}(L, F)$, can be represented as
	\begin{equation}\label{eq14}
		\begin{aligned}
			\mathscr{L}_{2}(L, F) \triangleq& \operatorname{tr}\left(\left( X_{0}+\frac{r}{1-r} \Sigma\right) \bar{L} ^{\top} F\bar{L}\right)\\
			= &\operatorname{tr} \left(\left( X_{0}+\frac{r}{1-r} \Sigma\right) \left(L^{\top} F_{22} L+L^{\top} F_{12}^{\top}+F_{12} L+F_{11}\right)\right)\,. 			
		\end{aligned}		 	
	\end{equation}	 
	Therefore, the dual problem~(\ref{d1})--(\ref{d1-2}) is equivalent to
	\begin{align}
		\underset{ F}{\max} \ & \underset{ L } {\inf}   \mathscr{L}_{2}(L, F)\,, \label{dp2-1}\\
		\text { s.t. } & 	r A_{L}^{\top} F A_{L}-F+W \succeq \pmb{0} \,. \label{dp2-2}
	\end{align}

	To give an explicit expression for the function   $\mathcal{D}_2 ( F)\triangleq \underset{ L } {\inf}   \mathscr{L}_{2}(L, F)$,  consider the following three different situations: 
	
	Situation 1: Let $ F_{22}\succ \pmb{0}$. In this situation, letting $\frac{\partial \mathscr{L}_{2}(L, F)}{\partial L}=\pmb{0}$ yields	
	\begin{equation*}
		\begin{array}{l}
			F_{22} L \left( X_{0}+\frac{r}{1-r} \Sigma\right) +F_{22}^{\top} L \left( X_{0}+\frac{r}{1-r} \Sigma\right) ^{\top}+F_{12}^{\top} \left( X_{0}+\frac{r}{1-r} \Sigma\right) +F_{12}^{\top} \left( X_{0}+\frac{r}{1-r} \Sigma\right) ^{\top}=\pmb{0}\,. 
		\end{array}		
	\end{equation*}	
	By solving the above equation, the optimal point of the function $\mathscr{L}_{2}(L, F)$  can be solved to be $L^{*}=-F_{22}^{-1} F_{12}^{\top}$, so the  infimum of $\mathscr{L}_{2}(L, F)$ is attained at $L^{*}$,  and hence 
	\begin{equation}\label{oj2}
		\mathcal{D}_2 ( F) =   \mathscr{L}_{2}(L^{*}, F)=\operatorname{tr} \left(\left( X_{0}+\frac{r}{1-r} \Sigma\right) \left(F_{11}-F_{12} F_{22}^{-1} F_{12}^{\top}\right)\right)\,.
	\end{equation}
	Substituting $L^{*}$ in the constraint~(\ref{dp2-2})  and combining  with the objective function in~(\ref{oj2}) results in an alternative equivalent representation of the dual problem~(\ref{dp2-1})--(\ref{dp2-2}):
	\begin{align*}
		\underset{F }{\max} &\ \operatorname{tr} \left(\left( X_{0}+\frac{r}{1-r} \Sigma\right) \left(F_{11}-F_{12} F_{22}^{-1} F_{12}^{\top}\right)\right),\\
		\text { s.t. } & 	r \left[\begin{array}{ll}
			A & B
		\end{array}\right]^{\top}\left(F_{11}-F_{12} F_{22}^{-1} F_{12}^{\top}\right)\left[\begin{array}{ll}
			A & B
		\end{array}\right]-F+W \succeq \pmb{0}\,.
	\end{align*}	
	
Situation 2: Let $F_{22} \succeq \pmb{0} $ be a singular matrix assuming that $ \mathbb{R}\left(F_{22}\right)\supseteq  \mathbb{R}\left(F_{12}^{\top}\right) $. In this situation, the infimum of $\mathscr{L}_{2}(L, F)$ still exists. Suppose $\operatorname{rank}\left( F_{22}\right)  = q<m $, then $F_{22}$ can be factored as $F_{22}=U_{q} \Lambda_{q} U_{q}^{\top}$,	 where $ U_{q}\in \mathcal{R}^{m \times q}$ satisfies $ U_{q}^{\top}U_{q} =I_{q}$ and $\Lambda_{q} =
	\operatorname{diag}(\lambda_1,\cdots,\lambda_q)$ with $\lambda_1\geq \lambda_2 \geq\cdots\geq\lambda_q>0$.
	Moreover, the columns of  $ U_{q} $ span $\mathbb{R}\left(F_{22}\right)$. The pseudo-inverse $F_{22}^{\dagger}$ can be expressed as $F_{22}^{\dagger}=U_{q} \Lambda_{q}^{-1} U_{q}^{\top}$. By a process similar to that of Situation~1, the Lagrangian  dual problem associated with problem~\ref{p2} for Situation~2 can be deduced as follows
		\begin{align*}
		\underset{F }{\max} &\ \operatorname{tr} \left(\left( X_{0}+\frac{r}{1-r} \Sigma\right) \left(F_{11}-F_{12} F_{22}^{\dagger} F_{12}^{\top}\right)\right),\\
		\text { s.t. } & 	r \left[\begin{array}{ll}
			A & B
		\end{array}\right]^{\top}\left(F_{11}-F_{12} F_{22}^{\dagger} F_{12}^{\top}\right)\left[\begin{array}{ll}
			A & B
		\end{array}\right]-F+W \succeq \pmb{0}\,.
	\end{align*}				
	Since $ \mathbb{R}\left(F_{22}\right)\supseteq  \mathbb{R}\left(F_{12}^{\top}\right) $, there exists   an unitary matrix $U_{m-q}$ such that the columns of $U_{m-q}$ span  $\mathbb{N}\left(F_{12}\right)$. Let   $\Lambda_{m-q}$  be any diagonal matrix where the diagonal elements are positive, and construct a matrix $\bar{F}_{22}\succ \pmb{0}$ as
	\begin{equation*}
		\bar{F}_{22}=\left[\begin{array}{ll}
			U_{q} & U_{m-q}
		\end{array}\right]\left[\begin{array}{cc}
			\Lambda_{q} & \pmb{0} \\
			\pmb{0} & \Lambda_{m-q}
		\end{array}\right]\left[\begin{array}{ll}
			U_{q} & U_{m-q}
		\end{array}\right]^{\top} \,.
	\end{equation*}
	Noting  that
	\begin{equation*}
		\begin{array}{l}
			F_{12} \bar{F}_{22}^{-1} F_{12}^{\top} \\
			=F_{12}\left[\begin{array}{ll}
				U_{q} & U_{m-q}
			\end{array}\right]\left[\begin{array}{cc}
				\Lambda_{q}^{-1} & \pmb{0} \\
				\pmb{0} & \Lambda_{m-q}^{-1}
			\end{array}\right]\left[\begin{array}{c}
				U_{q}^{\top} \\
				U_{m-q}^{\top}
			\end{array}\right] F_{12}^{\top} \\
			=F_{12} U_{q}\Lambda_{q}^{-1} U_{q}^{\top} F_{12}^{\top}-\underbrace{F_{12} U_{m-q} \Lambda_{m-q}^{-1} U_{m-q}^{\top} F_{12}^{\top}}_{0}\\
			=F_{12} F_{22}^{\dagger} F_{12}^{\top}\,,
		\end{array}	
	\end{equation*}		
	it can be concluded that each admissible  candidate $F_{22} \succeq \pmb{0}$ leads to a value of the objective which can be obtained by substituting a  counterpart  $\bar{F}_{22}\succ\pmb{0}$.
	
	Situation 3:  $F_{22} \succeq \pmb{0}$  while   $ \mathbb{R}\left(F_{22}\right)\nsupseteq  \mathbb{R}\left(F_{12}^{\top}\right) $ or at least one negative eigenvalue is contained in matrix $ F_{22}$ . In this situation,  the Lagrangian is unbounded from below through minimizing (\ref{eq14}) in terms of $L$.

To summarize, without loss of generality, we follow up with the assumption that
  $F_{22}\succ \pmb{0}$.  Therefore, the dual problem~(\ref{dp2-1}) -- (\ref{dp2-2}) is equivalent to
	\begin{align}
		\underset{F }{\max} &\ \operatorname{tr} \left(\left( X_{0}+\frac{r}{1-r} \Sigma\right) \left(F_{11}-F_{12} F_{22}^{-1} F_{12}^{\top}\right)\right)\,,\label{dp3-1}\\
		\text { s.t. } & 	r \left[\begin{array}{ll}
			A & B
		\end{array}\right]^{\top}\left(F_{11}-F_{12} F_{22}^{-1} F_{12}^{\top}\right)\left[\begin{array}{ll}
			A & B
		\end{array}\right]-F+W \succeq \pmb{0}\,,\label{dp3-2} \\
		& F_{22}\succ \pmb{0}\,. \label{dp3-3}
	\end{align}

	Finally, through the introduction of  the slack variable $ M \in \mathcal{S}^{n}$ and the added constraint~(\ref{dp1-1}),	
	the Lagrange  dual problem~(\ref{dp3-1})--(\ref{dp3-3}) can be equivalently described by Problem~\ref{p3}.	In fact, let    $F^{*}$  be the optimal  point of the dual problem~(\ref{dp3-1})--(\ref{dp3-3})  and $\left(F^{*},M^{*} \right) $  be the optimal  point of Problem~\ref{p3}. Based on the positive definiteness of $X_{0}+\frac{r}{1-r} \Sigma$ and 
	 the constraint~(\ref{dp1-1}), 
	\begin{equation*}
		\operatorname{tr}  \left(\left( X_{0}+\frac{r}{1-r} \Sigma\right)  M^{*}\right) \leq \operatorname{tr}\left(\left( X_{0}+\frac{r}{1-r} \Sigma\right) \left(F_{11}^{*}-F_{12}^{*}\left(F_{22}^{*}\right)^{-1}\left(F_{12}^{*}\right)^{\top}\right)\right)\,.	
	\end{equation*}
	
	Because $M^{*}$ is the maximum point of the objective function~(\ref{dp1-1}), 
	\begin{equation*}
		\operatorname{tr}\left(\left( X_{0}+\frac{r}{1-r} \Sigma\right)  M^{*}\right) \geq \operatorname{tr}\left(\left( X_{0}+\frac{r}{1-r} \Sigma\right) \left(F_{11}^{*}-F_{12}^{*}\left(F_{22}^{*}\right)^{-1}\left(F_{12}^{*}\right)^{\top}\right)\right)\,,	
	\end{equation*}
	and hence
	\begin{equation*}
		\operatorname{tr}  \left(\left( X_{0}+\frac{r}{1-r} \Sigma\right) \left(M^{*}-F_{11}^{*}+F_{12}^{*}\left(F_{22}^{*}\right)^{-1}\left(F_{12}^{*}\right)^{\top}\right)\right)=0\,.
	\end{equation*} 
	Therefore, all eigenvalues of the matrix $M^{*}-F_{11}^{*}+F_{12}^{*}\left(F_{22}^{*}\right)^{-1}\left(F_{12}^{*}\right)^{\top}$ are zero. As  $M^{*}$ is symmetric, it results that  
	\begin{equation*}
		M^{*}=F_{11}^{*}-F_{12}^{*}\left(F_{22}^{*}\right)^{-1}\left(F_{12}^{*}\right)^{\top} \,,
	\end{equation*}	
	which implies that the dual problem~(\ref{dp3-1})--(\ref{dp3-3}) is equivalent to  Problem~\ref{p3}.	
\end{proof}
The next theorem gives  the  relationship between the optimal point to the dual problem associated with SLQ problem  and  the optimal Q-function. 

\begin{theorem}\rm	\label{th2}
	Denote $\left(F^{*},M^{*} \right) $ as the optimal point to Problem~\ref{p3}, then 
	 $\left(F^{*},M^{*} \right) $  is independent of  $ X_{0}+\frac{r}{1-r} \Sigma$. Furthermore, $F^{*}=H^{*}$ and $M^{*}=P^{*}$,  where $ H^{*}$  is the  parameter of the optimal Q-function  expressed in~(\ref{H}) and  $P^{*}$  is the solution to  DARE~(\ref{are1}).
\end{theorem}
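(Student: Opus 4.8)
The plan is to exhibit $(H^{*},P^{*})$ as an optimal point of Problem~\ref{p3} by a direct feasibility check combined with a matching of the dual value to $J^{*}$, and to read off the independence of the optimizer from $G\triangleq X_{0}+\frac{r}{1-r}\Sigma$ from the fact that $G$ enters only the objective~(\ref{dp1}), not the constraints~(\ref{dp1-2})--(\ref{dp1-1}). Throughout I write $C\triangleq[\,A\ \ B\,]$ and, for a feasible $F$, let $K\triangleq F_{11}-F_{12}F_{22}^{-1}F_{12}^{\top}$ denote the Schur complement appearing in~(\ref{dp1-2}) and~(\ref{dp1-1}). Since $G\succ\pmb{0}$ and~(\ref{dp1-1}) reads $M\preceq K$, the objective is maximised over $M$ at $M=K$, so it suffices to analyse $\max_{F}\operatorname{tr}(GK)$ subject to~(\ref{dp1-2})--(\ref{dp1-3}).

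First I would establish the a priori bound $K\preceq P^{*}$ for every feasible $F$. Rearranging~(\ref{dp1-2}) gives $F\preceq rC^{\top}KC+W$; since $F_{22}\succ\pmb{0}$ by~(\ref{dp1-3}) and the $(2,2)$ block $R+rB^{\top}KB$ of $rC^{\top}KC+W$ is positive definite (as $R\in\mathcal{S}^{m}_{++}$), the variational identity $v^{\top}(F/F_{22})v=\min_{w}\operatorname{col}(v,w)^{\top}F\operatorname{col}(v,w)$ shows the Schur-complement map is monotone. Applying it to $F\preceq rC^{\top}KC+W$ yields $K\preceq\mathcal{T}(K)$, where $\mathcal{T}(K)\triangleq Q+rA^{\top}KA-r^{2}A^{\top}KB(R+rB^{\top}KB)^{-1}B^{\top}KA$ is exactly the right-hand side of DARE~(\ref{are1}). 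Under Assumption~\ref{A2}, $P^{*}$ is the maximal symmetric solution of $P=\mathcal{T}(P)$, and the standard comparison theorem for the discrete-time Riccati operator guarantees that every subsolution $K\preceq\mathcal{T}(K)$ obeys $K\preceq P^{*}$.

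Next I would verify that $(H^{*},P^{*})$ is feasible and attains this bound. A block computation gives $rC^{\top}P^{*}C+W=H^{*}$, so with $F=H^{*}$ constraint~(\ref{dp1-2}) holds with equality; constraint~(\ref{dp1-3}) holds because $H^{*}_{22}=R+rB^{\top}P^{*}B\succ\pmb{0}$; and by~(\ref{are1}) the Schur complement of $H^{*}$ equals $Q+rA^{\top}P^{*}A-r^{2}A^{\top}P^{*}B(R+rB^{\top}P^{*}B)^{-1}B^{\top}P^{*}A=P^{*}$, so $M=P^{*}$ satisfies~(\ref{dp1-1}) with equality. Because $K\preceq P^{*}$ at every feasible point and $G\succ\pmb{0}$, we have $\operatorname{tr}(GK)\le\operatorname{tr}(GP^{*})$ with equality only when $K=P^{*}$; hence $M^{*}=K^{*}=P^{*}$ for every $G\succ\pmb{0}$, proving $M^{*}=P^{*}$ and its independence of $G$. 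The optimal value is then $\operatorname{tr}(GP^{*})=\mathbb{E}(\pmb{x}_0^{\top}P^{*}\pmb{x}_0)+\frac{r}{1-r}\operatorname{tr}(P^{*}\Sigma)=J^{*}$, matching Proposition~1.

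The delicate step, and the main obstacle, is upgrading $K^{*}=P^{*}$ to the identity $F^{*}=H^{*}$. Once $K^{*}=P^{*}$, constraint~(\ref{dp1-2}) forces $F^{*}\preceq rC^{\top}P^{*}C+W=H^{*}$ while $K^{*}=\operatorname{Schur}(F^{*})=P^{*}=\operatorname{Schur}(H^{*})$; I would then invoke complementary slackness, writing the dual slack $F_{0}^{*}=H^{*}-F^{*}\succeq\pmb{0}$ and using $\operatorname{tr}(F_{0}^{*}S_{p})=0$ at optimality. The difficulty is that the primal optimiser $S_{p}=\bar{L}\big(\sum_{k}r^{k}\mathbb{E}[\pmb{x}_{k}\pmb{x}_{k}^{\top}]\big)\bar{L}^{\top}$ (with $L=L^{*}$) is supported on $\mathbb{R}(\bar{L})$ and is rank-deficient, so complementary slackness only pins $F^{*}$ on that subspace and leaves a residual freedom $F^{*}=H^{*}-\bar{E}D_{22}\bar{E}^{\top}$ with $\bar{E}=\operatorname{col}(L^{*\top},-I_{m})$ and $0\preceq D_{22}\prec H^{*}_{22}$; since $\bar{E}^{\top}\bar{L}=\pmb{0}$, each such $F^{*}$ satisfies the KKT system and is optimal, and (reassuringly) all yield the same gain $-F_{22}^{*-1}F_{12}^{*\top}=L^{*}$. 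To single out $F^{*}=H^{*}$ one must therefore select the canonical dual solution with $F_{0}^{*}=\pmb{0}$ (equivalently, tightness of~(\ref{dp1-2}) on all of $\mathcal{R}^{n+m}$); handling this selection cleanly is where the real work lies, because the bare KKT conditions do not determine $F^{*}$ uniquely.
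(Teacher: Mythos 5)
Your argument has two parts of quite different status. For $M^{*}=P^{*}$ and the independence from $G=X_{0}+\frac{r}{1-r}\Sigma$ you take a genuinely different route from the paper: you bound the Schur complement $K$ of every feasible $F$ by $P^{*}$ via monotonicity of the Schur-complement map and the comparison theorem for the discrete Riccati operator, and then exhibit $(H^{*},P^{*})$ as a feasible point attaining the bound. The paper instead constructs explicit KKT multipliers $\hat G_{1},\hat G_{2},\hat G_{3}$ at $(\hat F,\hat M)=(H^{*},P^{*})$ --- in particular solving a Stein equation~(\ref{eq29}) for the $(1,1)$ block of $\hat G_{1}$ --- and invokes sufficiency of the KKT conditions for the convex Problem~\ref{p3}. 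Your route yields the stronger conclusion that \emph{every} optimizer has $M^{*}=P^{*}$ (the paper's KKT argument only certifies that $(H^{*},P^{*})$ is \emph{an} optimal point), at the cost of importing the Riccati comparison theorem. One small repair: positive definiteness of $R+rB^{\top}KB$ should be deduced from the $(2,2)$ block of~(\ref{dp1-2}) together with $F_{22}\succ\pmb{0}$, not from $R\succ\pmb{0}$ alone, since $K$ is not known to be positive semidefinite a priori.

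On the claim $F^{*}=H^{*}$ your proposal stops short, and you say so explicitly; this is a genuine gap in your proof of the statement as written. But the obstruction you identify is real and is not resolved by the paper either: the family $F=H^{*}-\bar E D\bar E^{\top}$ with $\bar E=\operatorname{col}\left(L^{*\top},-I_{m}\right)$ and $\pmb{0}\preceq D\prec H_{22}^{*}$ is feasible, has Schur complement exactly $P^{*}$ (since $F_{12}=-L^{*\top}\left(H_{22}^{*}-D\right)$ makes the $D$-terms cancel), and attains the same objective value, so the maximizer of Problem~\ref{p3} is not unique in $F$ and the literal assertion that \emph{the} optimal point satisfies $F^{*}=H^{*}$ cannot be established --- the paper's proof in fact only shows that $(H^{*},P^{*})$ satisfies the KKT conditions and is therefore one optimal point. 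What saves the downstream algorithm is precisely your closing observation: every $F$ in the optimal set returns the same gain $-F_{22}^{-1}F_{12}^{\top}=L^{*}$, so the extraction step~(\ref{lo}) is well defined regardless of which optimizer the SDP solver returns. If you want your write-up to be airtight, state and prove the weaker conclusion ``$(H^{*},P^{*})$ is an optimal point, and every optimal point yields the gain $L^{*}$,'' rather than attempting to force uniqueness of $F^{*}$.
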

\begin{proof}
By introducing multipliers 	$G_{1}\in \mathcal{S}^{n+m}_+$, $G_{2}\in \mathcal{S}^{m}_+$ and $G_{3}\in \mathcal{S}^{n}_+$,  the Lagrangian associated with Problem~\ref{p3} is derived by
	\begin{equation*}
			\begin{aligned}
			&\mathscr{L}_{3}\left(F, M, G_{1}, G_{2}, G_{3}\right)\\
			=&-\operatorname{tr}\left( \left( X_{0}+\frac{r}{1-r} \Sigma\right)  M\right)  \\			
			&-\operatorname{tr}\left(G_{1} \left(r \left[\begin{array}{ll}
				A & B
			\end{array}\right]^{\top}\left(F_{11}-F_{12} F_{22}^{-1} F_{12}^{\top}\right)\left[\begin{array}{ll}
				A & B
			\end{array}\right] -F+W\right) \right) \\
			&-\operatorname{tr}\left( G_{2}F_{22}\right) -\operatorname{tr}\left(G_{3}\left(F_{11}-F_{12} F_{22}^{-1} F_{12}^{\top}-M\right)\right) \,.
		\end{aligned}
		 \end{equation*}
	 As Problem~\ref{p3} is convex, the necessary condition under which the points are primal and dual optimal points is the Karush-Kuhn-Tucker (KKT) conditions \cite{boyd2004convex}.  That is, if $\hat{F}$, $\hat{M}$, $\hat{G}_{1}$,  $\hat{G}_{2}$ and $\hat{G}_{3}$ are any points that satisfy the KKT conditions of Problem~\ref{p3} summarized below
	
	(1) Primal feasibility: (\ref{dp1-2})--(\ref{dp1-1});
	
	(2) Dual feasibility:
	\begin{equation*}
		G_{1} \succeq \pmb{0}\,, \quad G_{2} \succeq \pmb{0}\,, \quad G_{3} \succeq \pmb{0} \,;
	\end{equation*}
	
	(3) Complementary slackness:
	\begin{equation}\label{cs2}
		\operatorname{tr}\left(G _ { 1 } \left(r \left[\begin{array}{ll}
			A & B
		\end{array}\right]^{\top}\left(F_{11}-F_{12} F_{22}^{-1} F_{12}^{\top}\right) \left[\begin{array}{ll}
			A & B
		\end{array}\right]	-F+W\right) \right) =0\,, 
	\end{equation}
	\begin{equation*}\label{cs3}
		\operatorname{tr}\left(G_{2}F_{22}\right)=0\,,
	\end{equation*}	
	\begin{equation}\label{cs1}
		\operatorname{tr}\left(G_{3}\left(F_{11}-F_{12} F_{22}^{-1} F_{12}^{\top}-M\right)\right)=0\,;
	\end{equation}
	
	(4) Stationarity conditions:
	\begin{equation}\label{sc}
		\frac{\partial\mathscr{L}_{3}}{\partial M} =\pmb{0}\,,\quad  \frac{\partial\mathscr{L}_{3}}{\partial F} =\pmb{0} \,,
	\end{equation}
	then  $\left( \hat{F}, \hat{M}\right) $ and $\left( \hat{G}_{1},  \hat{G}_{2}, \hat{G}_{3}\right)$ are primal and dual optimal points, respectively. 
	
	Choose $\hat{F}=H^{*}$ and $\hat{M}=P^{*}$. Taking $\hat{F}=H^{*}$  into the left-hand side of~(\ref{dp1-2})  causes the left-hand side of~(\ref{dp1-2}) to be equal to $\pmb{0}$, from which it follows that
	 the constraint~(\ref{dp1-2}) is active, i.e., the constraint~(\ref{cs2}) is  satisfied naturally without imposition of any constraints on  $\hat G_{1}$. Moreover, taking  $(\hat{F}, \hat{M})=\left( H^{*}, P^{*}\right) $   into the left-hand side of constraint~(\ref{dp1-1})  causes the left-hand side of~(\ref{dp1-1}) to be equal to $\pmb{0}$.
	That is, the constraint~(\ref{dp1-1}) is  active and~(\ref{cs1}) is satisfied naturally without imposition of any constraints on  $\hat G_{3}$.	
	Since  $\hat{F}_{22}=H^{*}_{22}\succ \pmb{0}$, the constraint~(\ref{dp1-3}) is not active at the  point $\hat{F}=H^{*}$,  which results in    $\hat{G}_{2}=\pmb{0}$.
	
	It is shown below that the stationary conditions ~(\ref{sc}) can be satisfied at $(\hat{F}, \hat{M})$ by a proper choice of $\hat{G}_{1}$ and $\hat{G}_{3}$.
	To this end, the matrix variable $F$ is reformulated as 
	 \begin{equation*}
		F=E_{1} F_{11} E_{1}^{\top}+E_{1} F_{12} E_{2}^{\top}+E_{2} F_{12}^{\top} E_{1}^{\top}+E_{2} F_{22} E_{2}^{\top}	
	\end{equation*}
	 by utilizing $E_{1}= \left[\begin{array}{ll}I_{n} & \pmb{0}\end{array}\right]^{\top}$ and  $E_{2}=\left[\begin{array}{ll}\pmb{0} & I_{m}\end{array}\right]^{\top} $.
	Then, (\ref{sc}) can be equivalently expressed as~(\ref{sc1})--(\ref{sc4}).
	\begin{align}
		\frac{\partial\mathscr{L}_{3}}{\partial M} &=-\left( X_{0}+\frac{r}{1-r} \Sigma\right) ^{\top}+G_{3}^{\top}=\pmb{0}\,, \label{sc1}\\
		\frac{\partial\mathscr{L}_{3}}{\partial F_{11}}  &=-G_{3}-r \left[\begin{array}{ll}
			A & B
		\end{array}\right] G_{1}\left[\begin{array}{ll}
			A & B
		\end{array}\right]^{\top}+E_{1}^{\top} G_{1} E_{1}=\pmb{0}\,, \label{sc2}\\
		\frac{\partial\mathscr{L}_{3}}{\partial F_{12}}  &=2r \left[\begin{array}{ll}
			A & B
		\end{array}\right]G_{1}\left[\begin{array}{ll}
			A & B
		\end{array}\right]^{\top} \hat{F}_{12} \hat{F}_{22}^{-1}+2G_{3} \hat{F}_{12} \hat{F}_{22}^{-1} +2E_{1}^{\top} G_{1} E_{2}=\pmb{0}\,, \label{sc3}\\
		\frac{\partial\mathscr{L}_{3}}{\partial F_{22}}  &=E_{2}^{\top} G_{1} E_{2}-\hat{F}_{22}^{-1}\left(\begin{array}{ll}
			F_{12}^{\top}G_{3} \hat{F}_{12}
		\end{array}+\hat{F}_{12}^{\top}r \left[\begin{array}{ll}
			A & B
		\end{array}\right] G_{1}\left[\begin{array}{ll}
			A & B
		\end{array}\right]^{\top} \hat{F}_{12}\right) \hat{F}_{22}^{-1}=\pmb{0}\,. \label{sc4}
	\end{align}
	Choose 
	\begin{equation}\label{g3}
		\hat{G}_{3}= X_{0}+\frac{r}{1-r} \Sigma \succ \pmb{0}\,, 
	\end{equation}
	which satisfies the stationary condition~(\ref{sc1}).
	
	From~(\ref{sc2}) and (\ref{sc3}),  $G_{1}$ should  be subject to the following constraint:
	\begin{equation}\label{eq25}
		E_{1}^{\top} G_{1} E_{1} \hat{F}_{12} \hat{F}_{22}^{-1}+E_{1}^{\top} G_{1} E_{2}=\pmb{0}. 
	\end{equation}
	Furthermore, from~(\ref{sc2}) and (\ref{sc4}), $G_{1}$ should also be subject to the following constraint:
	\begin{equation}\label{eq26}
		-\hat{F}_{12}^{\top} E_{1}^{\top} G_{1} E_{1} \hat{F}_{12}+\hat{F}_{22} E_{2}^{\top} G_{1} E_{2} \hat{F}_{22}=\pmb{0}\,.
	\end{equation}
	By dividing $G_{1}$ into the block matrix $G_{1}=\left[\begin{array}{ll}G^{1} _{11} & G^{1}_{12} \\ \left( G^{1}_{12}\right) ^{\top} & G^{1}_{22}\end{array}\right]$,   (\ref{eq25}) and (\ref{eq26}) are represented as
	\begin{align}
		&G^{1} _{11} \hat{F}_{12} \hat{F}_{22}^{-1}+G^{1} _{12}=\pmb{0}\,, \label{eq27}\\
		&-\hat{F}_{12}^{\top} G^{1} _{11} \hat{F}_{12}+\hat{F}_{22}G^{1} _{22}\hat{F}_{22}=\pmb{0}\,.\label{eq28}
	\end{align}
	By substituting (\ref{g3})  (\ref{eq27}) and (\ref{eq28}) in (\ref{sc2}), we have
	\begin{equation}\label{eq29}
		\hat G_{3}+r\left[A-B \hat{F}_{22}^{-1} \hat{F}_{12}^{\top}\right]G^{1}_{11}\left[A-B \hat{F}_{22}^{-1} \hat{F}_{12}^{\top}\right]^{\top}=G^{1}_{11} \,.	
	\end{equation}
	Based on the representation of the optimal control gain given in~(\ref{lstar}), substituting  $\hat{F}=H^{*}$ in $A-B \hat{F}_{22}^{-1} \hat{F}_{12}^{\top}$ results in $A+BL^{*}$. Since $L^{*} \in \mathcal{L}$ leads to $\rho\left(A-B \hat{F}_{22}^{-1} \hat{F}_{12}^{\top}\right)<1$.  Then, when  
	\begin{equation*}
		r>1-\frac{\lambda_{\min }\left(\hat G_{3} \right)}{\lambda_{\max }\left(\left[A-B \hat{F}_{22}^{-1} \hat{F}_{12}^{\top}\right]G^{1}_{11}\left[A-B \hat{F}_{22}^{-1} \hat{F}_{12}^{\top}\right]^{\top}\right)}\,,	
	\end{equation*}  
	one can obtain from Lemma 1  that  for  given $\hat G_{3}= X_{0}+\frac{r}{1-r} \Sigma\succ\pmb{0}$, there exists a unique $\hat G^{1}_{11}\succ \pmb{0}$  such that~(\ref{eq29}) holds. 
	Accordingly,  construct $\hat{G}_{1}$  as follows
	\begin{equation*}
		\begin{aligned}
			\hat{G}_{1} & =\left[\begin{array}{cc}
				\hat{G}^{1}_{11} & -\hat{G}^{1}_{11} \hat{F}_{12} \hat{F}_{22}^{-1} \\
				-\hat{F}_{22}^{-1} \hat{F}_{12}^{\top} \hat{G}^{1}_{11} & \hat{F}_{22}^{-1} \hat{F}_{12}^{\top} \hat{G}^{1}_{11} \hat{F}_{12} \hat{F}_{22}^{-1}
			\end{array}\right] \\
			& =\left[\begin{array}{c}
				I_{n} \\
				-\hat{F}_{22}^{-1} \hat{F}_{12}^{\top}
			\end{array}\right] \hat{G}^{1}_{11}\left[\begin{array}{ll}
				I_{n} & -\hat{F}_{12}\left(\hat{F}_{22}^{-1}\right)^{\top}
			\end{array}\right] \succeq  \pmb{0}\,.
		\end{aligned}
	\end{equation*}
	In summary, there  exist  $\hat{F}=H^{*}$,  $\hat{M}=P^{*}$, $	\hat{G}_{1} =\left[\begin{array}{c}
		I_{n} \\
		-\hat{F}_{22}^{-1} \hat{F}_{12}^{\top}
	\end{array}\right] \hat{G}^{1}_{11}\left[\begin{array}{c}
	I_{n} \\
	-\hat{F}_{22}^{-1} \hat{F}_{12}^{\top}
\end{array}\right]^{\top}$, $	\hat{G}_{2} =\pmb{0}$ and $\hat G_{3}= X_{0}+\frac{r}{1-r} \Sigma_0$ that satisfy the KKT conditions of Problem~\ref{p3}, thus $\left( H^{*},P^{*}\right) $ and 
	$\left(\left[\begin{array}{c}
			I_{n} \\
			-\hat{F}_{22}^{-1} \hat{F}_{12}^{\top}
		\end{array}\right] \hat{G}^{1}_{11}\left[\begin{array}{ll}
			I_{n} & -\hat{F}_{12}\left(\hat{F}_{22}^{-1}\right)^{\top}
		\end{array}\right]\,,\pmb{0}\,, X_{0}+\frac{r}{1-r} \Sigma_0 \right)$	
		are primal and dual optimal points, respectively. Hence,  $F^{*}=H^{*}$ and $M^{*}=P^{*}$. Moreover, according to the expressions of $H^{*}$ and $P^{*}$, $\left( H^{*}, P^{*}\right) $ is  independent of $ X_{0}+\frac{r}{1-r} \Sigma$.
\end{proof}
\begin{Remark}\rm	
	Note that for optimization problems that hold for strong duality, when the primal problem is convex, the KKT conditions are  sufficient and necessary conditions such that the points are primal and dual optimal  \cite{boyd2004convex}. And for general primal problems that are convex, strong duality holds under some modest conditions (e.g., the Slater's condition)  \cite{boyd2004convex}. The fact that the primal problem of this paper is non-convex makes the task of proving that strong duality holds nontrivial, as it was in  \cite{lee2018primal} and \cite{li2022model}. Instead of proceeding directly to show that strong duality holds, this paper finds a relationship between the optimal point to the dual problem and the parameter in Q-learning, starting from the dual problem  and then using the convexity of the dual problem and the conclusion that the KKT conditions in convex optimization is a sufficient condition for optimality \cite{boyd2004convex}.
\end{Remark}
Based on the above two theorems, we can now represent the  dual problem  associated with the SLQ problem 
as a standard SDP problem in Problem~\ref{p4}.
\begin{Proposition}\rm
	Problem~\ref{p3} is equivalent to Problem~\ref{p4} in a meaning that optimal points are the same.	
\end{Proposition}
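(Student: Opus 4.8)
The plan is to treat Problem~\ref{p4} as the linear-matrix-inequality (LMI) reformulation of Problem~\ref{p3} obtained by (i) collapsing the slack constraint~(\ref{dp1-1}) together with~(\ref{dp1-3}) into a single block LMI via the Schur complement, and (ii) replacing the Schur complement $N\triangleq F_{11}-F_{12}F_{22}^{-1}F_{12}^{\top}$ by the slack variable $M$ inside the congruence term of~(\ref{dp1-2}). Since both resulting inequalities are then affine in $(F,M)$ and the objective~(\ref{dp1}) is already linear, Problem~\ref{p4} is a genuine SDP. To prove that the two problems share the same optimizers, I would establish the two value inequalities separately and then match the optimal points, denoting by $d_{3}^{*}$ and $d_{4}^{*}$ the respective optimal values.

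First I would record that step~(i) is lossless: for $F_{22}\succ\pmb{0}$ the Schur complement lemma gives
\[
\begin{bmatrix} F_{11}-M & F_{12}\\ F_{12}^{\top} & F_{22}\end{bmatrix}\succeq\pmb{0}
\quad\Longleftrightarrow\quad
N-M=F_{11}-F_{12}F_{22}^{-1}F_{12}^{\top}-M\succeq\pmb{0},
\]
so the pairs $(F,M)$ admitted by step~(i) are exactly those obeying~(\ref{dp1-3}) and~(\ref{dp1-1}); in particular each satisfies $N\succeq M$. This immediately yields the inclusion ``Problem~\ref{p4} $\subseteq$ Problem~\ref{p3}'': if $(F,M)$ is feasible for Problem~\ref{p4}, then monotonicity of the congruence $X\mapsto[A\ B]^{\top}X[A\ B]$ under $N\succeq M$ turns the Problem~\ref{p4} constraint $r[A\ B]^{\top}M[A\ B]-F+W\succeq\pmb{0}$ into $r[A\ B]^{\top}N[A\ B]-F+W\succeq\pmb{0}$, which is exactly~(\ref{dp1-2}). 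As the objectives coincide, every feasible point of Problem~\ref{p4} is feasible for Problem~\ref{p3} with the same value, and for the maximization this gives $d_{4}^{*}\le d_{3}^{*}$.

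For the reverse inequality I would invoke the activeness of~(\ref{dp1-1}) at the optimum of Problem~\ref{p3}. As already shown when reducing the dual to Problem~\ref{p3} (and reconfirmed in Theorem~\ref{th2}), the optimal point $(F^{*},M^{*})$ satisfies $M^{*}=F_{11}^{*}-F_{12}^{*}(F_{22}^{*})^{-1}(F_{12}^{*})^{\top}=N^{*}$, since maximizing $\operatorname{tr}((X_{0}+\frac{r}{1-r}\Sigma)M)$ with $X_{0}+\frac{r}{1-r}\Sigma\succ\pmb{0}$ and $M\preceq N$ forces the slack to vanish. With $N^{*}=M^{*}$, constraint~(\ref{dp1-2}) reads $r[A\ B]^{\top}M^{*}[A\ B]-F^{*}+W\succeq\pmb{0}$, while~(\ref{dp1-1}) together with $F_{22}^{*}\succ\pmb{0}$ produces the block LMI of step~(i); hence $(F^{*},M^{*})$ is feasible for Problem~\ref{p4} and $d_{4}^{*}\ge d_{3}^{*}$. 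Combining the two bounds gives $d_{3}^{*}=d_{4}^{*}$, so $(F^{*},M^{*})$ is optimal for both, and conversely any optimizer of Problem~\ref{p4}, being feasible for Problem~\ref{p3} with the common value, is optimal for Problem~\ref{p3}; the optimal-point sets therefore coincide.

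The main obstacle is that the feasible sets are \emph{not} equal: only the inclusion Problem~\ref{p4} $\subseteq$ Problem~\ref{p3} holds, because replacing $N$ by the smaller $M$ strictly tightens~(\ref{dp1-2}). Equality of the optima cannot then follow from a set identity and must be forced by the activeness $M^{*}=N^{*}$ of the slack constraint at the optimum. The delicate point is to justify this activeness here without circularity and to verify that it places the Problem~\ref{p3} optimizer inside the strictly smaller feasible region of Problem~\ref{p4}; once that is secured, the sandwich $d_{4}^{*}\le d_{3}^{*}\le d_{4}^{*}$ closes the argument.
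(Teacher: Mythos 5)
Your argument is aimed at the wrong formulation of Problem~\ref{p4}, and as a consequence both the structure of your proof and its one genuinely delicate step do not match what actually needs to be shown. You assume that the first constraint of Problem~\ref{p4} is obtained by substituting the slack variable $M$ for the Schur complement $N=F_{11}-F_{12}F_{22}^{-1}F_{12}^{\top}$ inside the congruence term of~(\ref{dp1-2}), i.e.\ that it reads $r[A\ B]^{\top}M[A\ B]-F+W\succeq\pmb{0}$. It does not: constraint~(\ref{p4-3}) keeps $F_{11}$ in its $(1,1)$ block and $F_{12}$ in its off-diagonal blocks, so its Schur complement with respect to the $F_{22}$ block is \emph{exactly}~(\ref{dp1-2}), with $N$ intact. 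Likewise~(\ref{p4-2}) is, for $F_{22}\succ\pmb{0}$, exactly~(\ref{dp1-1}). Hence the feasible sets of Problems~\ref{p3} and~\ref{p4} coincide outright (up to the boundary question of $F_{22}\succ\pmb{0}$ versus $F_{22}\succeq\pmb{0}$), and your entire machinery of a strict inclusion of feasible sets, the sandwich $d_{4}^{*}\le d_{3}^{*}\le d_{4}^{*}$, and the activeness $M^{*}=N^{*}$ forcing the Problem~\ref{p3} optimizer into a smaller region, is solving a problem the paper does not pose.

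The step you do need, and omit, concerns the objectives: Problem~\ref{p3} maximizes $\operatorname{tr}\bigl(\bigl(X_{0}+\tfrac{r}{1-r}\Sigma\bigr)M\bigr)$ while Problem~\ref{p4} maximizes $\operatorname{tr}(M)$, so your assertion that ``the objectives coincide'' is false. Dropping the positive definite weight $X_{0}+\tfrac{r}{1-r}\Sigma$ changes the objective function nontrivially, and equality of the optimal points must be argued; the paper does this by invoking Theorem~\ref{th2}, which shows that the optimal point $(F^{*},M^{*})$ of Problem~\ref{p3} equals $(H^{*},P^{*})$ and is therefore independent of the weighting matrix, so the weight may be replaced by the identity without moving the optimizer. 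Without some such argument (or a direct verification that the KKT analysis of Theorem~\ref{th2} goes through with $\hat{G}_{3}=I_{n}$ in place of $X_{0}+\tfrac{r}{1-r}\Sigma$), the equivalence of optimal points is not established. Your observation that maximizing a positive-definite-weighted trace of $M$ subject to $M\preceq N$ forces $M^{*}=N^{*}$ is correct and is indeed used in the paper, but it is used in the proof of Theorem~1 to justify introducing the slack variable, not here.
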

\begin{Problem}\rm \label{p4} SDP with optimization variables $ F \in \mathcal{S}^{n+m}$ and $M \in \mathcal{S}^{n}$.
	\begin{align}
		\underset{F,M }{\max} & \ \operatorname{tr}\left(M\right)\,, \label{p4-1}\\
		\text { s.t. } 	&\left[\begin{array}{cc}
			{r\left[\begin{array}{cc}
					A & B
				\end{array}\right]^{\top}F_{11}\left[\begin{array}{ll}
					A & B
				\end{array}\right]-F+W} & r^{\frac{1}{2}}{\left[\begin{array}{ll}
					A & B
				\end{array}\right]^{\top} F_{12}} \\
			r^{\frac{1}{2}} \left( F_{12}\right) ^{\top}\left[\begin{array}{ll}
				A & B
			\end{array}\right] & F_{22}
		\end{array}\right] \succeq \pmb{0}\,,\label{p4-3}\\
		& \left[\begin{array}{cc}
			F_{11}-M & F_{12} \\
			F_{12}^{\top} & F_{22}
		\end{array}\right] \succeq \pmb{0}\,.	\label{p4-2}
	\end{align} 
\end{Problem} 
\begin{proof}	
	  Using the Schur complement property, the constraints~(\ref{dp1-2}) and~(\ref{dp1-1}) of Problem~\ref{p3} can be rewritten  in the  LMI  forms described by~(\ref{p4-3}) and~(\ref{p4-2}), respectively.  Moreover,  the constraint~(\ref{dp1-3}) of Problem~\ref{p3} is implicitly contained in~(\ref{p4-2}). That is, the constraints of Problem~\ref{p3} and Problem~\ref{p4} are equivalent, then their optimal points are the same, and subsequently we denote them by the same notation $\left(F^{*}, M^{*}\right) $. Finally, Theorem~\ref{th2} means  that   $\left(F^{*}, M^{*}\right) $  is independent of  $\left( X_{0}+\frac{r}{1-r} \Sigma\right) $. Then, without loss of generality, the objective function~(\ref{dp1}) of Problem~\ref{p3} can be expressed as~(\ref{p4-1}) in Problem~\ref{p4}. Thus, Problem~\ref{p3} is equivalently described by the SDP in Problem~\ref{p4}.	
\end{proof}
With the above preparation, we
can now present the  model-based SDP algorithm.
\begin{algorithm}[htb]
	\caption{Model-Based SDP Algorithm}
	\label{alg:0}
	\begin{algorithmic}[1]
		\STATE  Choose a positive discount factor $r$ close to 1 and choose the initial state $\pmb{x}_{0} \sim \mathcal{N}_{n}( \pmb{\mu}_{0},\Sigma_{0})$, where  $\Sigma_{0}  \succ \pmb{0}$.
		\STATE   Solve  SDP in Problem~\ref{p4}  for $\left( F^{*}, M^{*}\right)$.	
		\STATE  Obtain the optimal state feedback controller $ L^{*}=-\left( F^{*}_{22}\right) ^{-1} \left( F^{*}_{12}\right) ^{\top}$.
	\end{algorithmic}
\end{algorithm}	

\section{Model-free  implementation of model-based SDP algorithm } 
The optimization problem in Section 3 involves knowing the system dynamics. In this section, we attempt to avoid this requirement 
and propose the model-free implementation of   SLQ controller design.

Assume  that the   triplets $\left(	\pmb{x}_{k}^{(i)},\pmb{u}_{k}^{(i)},\pmb{x}_{k+1}^{(i)} \right)\,,i =  1,\ldots,l\,,k = 0,\ldots,N-1 $ are available, where $l$ denotes the number of  sampling experiments implemented and  $N$ denotes the length of the sampling time  horizon in the data collection phase.  
Using the data obtained from the 
$l$ sampling experiments to define $Y^{(i)} \triangleq \left[\begin{array}{llll}
	\pmb{x}_{1}^{(i)} & \pmb{x}_{2}^{(i)} & \cdots & \pmb{x}_{N}^{(i)} 
\end{array}\right]$ and $Z^{(i)} \triangleq\left[\begin{array}{llll}
	\pmb{x}_{0}^{(i)} & \pmb{x}_{1}^{(i)} & \cdots & \pmb{x}_{N-1}^{(i)} \\
	\pmb{u}_{0}^{(i)} & \pmb{u}_{1}^{(i)} & \cdots & \pmb{u}_{N-1}^{(i)}
\end{array}\right]$, respectively, $i =  1,\ldots,l$.  Suppose that for each $i =  1,\ldots,l$, $Z^{(i)}$ has full row  rank.  According to the properties of matrix congruence,    the inequality constraint~(\ref{dp1-2}) in Problem~\ref{p3} is left-multiplied by $\left( Z^{(i)}\right) ^{\top}$  and right-multiplied by $Z^{(i)}$ , which yields its equivalent constraint
\begin{equation*}
	r\left( Z^{(i)}\right)^{\top}\left[\begin{array}{ll}
		A & B
	\end{array}\right]^{\top}\left(F_{11}-F_{12} F_{22}^{-1} F_{12}^{\top}\right)\left[\begin{array}{ll}
		A & B
	\end{array}\right] Z^{(i)}-\left( Z^{(i)}\right) ^{\top}\left( F-W\right) Z^{(i)}\succeq \pmb{0}\,.
\end{equation*}	
Using the properties of positive semidefinite  matrices and $l>0$, one obtains 
\begin{equation}\label{qiwang}
	\begin{aligned}
	&\frac{1}{l}\sum_{i=1}^{l}\left[ r\left( Z^{(i)}\right)^{\top}\left[\begin{array}{ll}
		A & B
	\end{array}\right]^{\top}\left(F_{11}-F_{12} F_{22}^{-1} F_{12}^{\top}\right)\left[\begin{array}{ll}
		A & B
	\end{array}\right] Z^{(i)}\right] \\
-&\frac{1}{l}\sum_{i=1}^{l}\left[\left( Z^{(i)}\right) ^{\top}\left( F-W\right) Z^{(i)}\right] \succeq \pmb{0}\,.
\end{aligned}
\end{equation}
Based on the Monte-Carlo method, we can approximate the mathematical expectation using the numerical average of $l$ sample paths. Hence, noting  that  
\begin{equation*}
	\begin{aligned}
		&\mathbb{E}\left[ Y^{\top}\left(F_{11}-F_{12} F_{22}^{-1} F_{12}^{\top}\right) Y \right] \\
		=&\mathbb{E}\left[Z^{\top}\left[\begin{array}{ll}
			A & B
		\end{array}\right]^{\top} \left(F_{11}-F_{12} F_{22}^{-1} F_{12}^{\top}\right)\left[\begin{array}{ll}
			A & B
		\end{array}\right]Z\right]\\
		&+\mathbb{E}\left[\mathcal{W}^{\top}\left(F_{11}-F_{12} F_{22}^{-1} F_{12}^{\top}\right)\mathcal{W}\right], 
	\end{aligned}
\end{equation*}
where
\begin{equation*}
	\begin{aligned}
		 	 Z &\triangleq\left[\begin{array}{llll}
			\pmb{x}_{0} & \pmb{x}_{1} & \cdots & \pmb{x}_{N-1} \\
			\pmb{u}_{0} & \pmb{u}_{1} & \cdots & \pmb{u}_{N-1}
		\end{array}\right]\,, \quad
		\mathcal{W}\triangleq\left[\begin{array}{llll}
			\pmb{w}_{0} & \pmb{w}_{1} & \cdots & \pmb{w}_{N-1}\end{array}\right]\,,\\
		Y& \triangleq \left[\begin{array}{llll}
			\pmb{x}_{1} & \pmb{x}_{2} & \cdots & \pmb{x}_{N} 
		\end{array}\right]
	=\left[\begin{array}{ll}
			A & B
		\end{array}\right] Z+\mathcal{W}\,,
	\end{aligned}
\end{equation*}
  one has 
\begin{equation}\label{p5-33}
	\begin{aligned}
		&\frac{1}{l}\sum_{i=1}^{l}\left[ r\left( Z^{(i)}\right)^{\top}\left[\begin{array}{ll}
			A & B
		\end{array}\right]^{\top}\left(F_{11}-F_{12} F_{22}^{-1} F_{12}^{\top}\right)\left[\begin{array}{ll}
			A & B
		\end{array}\right] Z^{(i)}\right]\\
		\approx&\frac{1}{l}\sum_{i=1}^{l}\left[r\left( Y^{(i)}\right)^{\top}\left(F_{11}-F_{12} F_{22}^{-1} F_{12}^{\top}\right)Y^{(i)}\right]-r\operatorname{tr} \left( \left(F_{11}-F_{12} F_{22}^{-1} F_{12}^{\top}\right)\Sigma\right) I_N \,.
	\end{aligned}
\end{equation}
By substituting (\ref{p5-33})  in  (\ref{qiwang}),  we have
\begin{equation}\label{p5-3}
	\begin{aligned}
	&\frac{1}{l}\sum_{i=1}^{l}\left[r\left( Y^{(i)}\right)^{\top}\left(F_{11}-F_{12} F_{22}^{-1} F_{12}^{\top}\right)Y^{(i)}\right]-r\operatorname{tr} \left( \left(F_{11}-F_{12} F_{22}^{-1} F_{12}^{\top}\right)\Sigma\right) I_N\\
	-&\frac{1}{l}\sum_{i=1}^{l}\left[ \left( Z^{(i)}\right) ^{\top}\left( F-W\right) Z^{(i)}\right] \succeq \pmb{0}\,.
\end{aligned}
\end{equation}
Because  $F_{22}\succ \pmb{0}$, applying Schur  complement and using matrix direct sum, one can equivalently express the constraint~(\ref{p5-3}) as
\begin{equation}\label{p5-66}
	\begin{aligned}
		\left[\begin{array}{cc}
			r\tilde{I} \tilde{D}^{\top}\tilde{F}_{11}  
				\tilde{D}\tilde{I}^{\top}+e & r^{\frac{1}{2}}\tilde{I}\tilde{D} ^{\top} \tilde{F}_{12} \\
			r^{\frac{1}{2}} \tilde{F}_{12} ^{\top}\tilde{D}\tilde{I}^{\top}  &\tilde{F}_{22} 	
		\end{array}\right] \succeq 0\,,	
	\end{aligned}
\end{equation}
\begin{equation*}
	\begin{aligned}
\text {where } \quad	 &e\triangleq-rl\operatorname{tr} \left( \left(F_{11}-F_{12} F_{22}^{-1} F_{12}^{\top}\right)\Sigma\right) I_N-\sum_{i=1}^{l}\left[ \left( Z^{(i)}\right) ^{\top}\left( F-W\right) Z^{(i)}\right]\,,\\
&\tilde{D}\triangleq  \underbrace{Y^{(1)}\oplus Y^{(2)}\oplus\cdots \oplus Y^{(l)}}_{l}\,,\quad \tilde{F}_{11}\triangleq \underbrace{F_{11}\oplus F_{11}\oplus\cdots \oplus F_{11}}_{l}\,,\\
	&\tilde{F}_{12}\triangleq \underbrace{F_{12}\oplus F_{12}\oplus\cdots \oplus F_{12}}_{l},\quad  \tilde{F}_{22}\triangleq \underbrace{F_{22}\oplus F_{22}\oplus\cdots \oplus F_{22}}_{l}\,,\\
	&\tilde{I}\triangleq\left[\underbrace{\begin{array}{llll}
			I_{N}& I_{N} & \cdots & I_{N}
	\end{array}}_{l}\right]\,.
		\end{aligned}
	\end{equation*}

Based on the above analysis, the following Theorem~\ref{th3} gives an SDP that solves the SLQ problem in the presence of unknown information about the system dynamics and without the need for an initial stabilizing control policy. 
\begin{theorem}\rm \label{th3}
	Suppose that $Z^{(i)}$ has full row  rank, for each $i =  1,\ldots,l$.   Then the optimal state feedback gain associated with the SLQ problem is approximated by
	\begin{equation}\label{lo}
		\hat{L}^{*}=-\left(F_{22}^{d}\right)^{-1}\left(F_{12}^{d}\right)^{\top} \,,
	\end{equation}
	where  $\left( F^{d},M^{d}\right)$  is the optimal point of  Problem~\ref{p5} with $F^{d}=\left[\begin{array}{cc}F_{11}^{d} & F_{12}^{d} \\ \left(F_{12}^{d}\right)^{\top} & F_{22}^{d}\end{array}\right]$.
	
	\begin{Problem}\rm	\label{p5}
		SDP with optimization variables  $F \in \mathcal{S}^{n+m}$  and  $M \in \mathcal{S}^{n}$.
		\begin{equation*}
			\begin{aligned}
			\underset{F,M }{\max} & \ \operatorname{tr}\left(M\right)\,, \\
			\text { s.t. } 	&\left[\begin{array}{cc}
			r\tilde{I} \tilde{D}^{\top}\tilde{F}_{11}  
					\tilde{D}\tilde{I}^{\top}+e 
				& r^{\frac{1}{2}}\tilde{I}\tilde{D}^{\top} \tilde{F}_{12} \\
				r^{\frac{1}{2}} \tilde{F}_{12}^{\top}\tilde{D}\tilde{I}^{\top} & \tilde{F}_{22} 	
			\end{array}\right]\succeq \pmb{0}\,,\\
			& \left[\begin{array}{cc}
				F_{11}-M & F_{12} \\
				F_{12}^{\top} & F_{22}
			\end{array}\right] \succeq \pmb{0}\,.
		\end{aligned}
	\end{equation*}

	\end{Problem}
\end{theorem}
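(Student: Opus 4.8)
The plan is to reduce Theorem~\ref{th3} to the single claim that the optimizer of the data-driven Problem~\ref{p5} approximates the optimizer of the model-based Problem~\ref{p4}, and then to read off the gain from the structural identity of Theorem~\ref{th2}. First I would recall that Problem~\ref{p4} shares its optimal point $(F^{*},M^{*})$ with the dual problem, and that Theorem~\ref{th2} gives $F^{*}=H^{*}$ with $H^{*}$ the optimal $Q$-function parameter in~(\ref{H}). Consequently $-(F_{22}^{*})^{-1}(F_{12}^{*})^{\top}=-(H_{22}^{*})^{-1}(H_{12}^{*})^{\top}=L^{*}$ by~(\ref{lstar}). Hence, once $(F^{d},M^{d})$ is shown to approximate $(F^{*},M^{*})$, the gain formula~(\ref{lo}) follows at once, and the whole burden shifts onto comparing the two SDPs.

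Second, I would assemble the reductions carried out just before the statement to certify that the first constraint of Problem~\ref{p5} is the data-driven surrogate of the model-based constraint~(\ref{p4-3}) (equivalently~(\ref{dp1-2}) after a Schur step). The chain is: (i) a congruence of~(\ref{dp1-2}) by each $Z^{(i)}$; because $Z^{(i)}$ has full row rank, $(Z^{(i)})^{\top}(\cdot)Z^{(i)}\succeq\pmb{0}$ is equivalent to $(\cdot)\succeq\pmb{0}$, so the per-sample reduction loses nothing; (ii) summation/averaging of these inequalities over $i=1,\dots,l$ to obtain~(\ref{qiwang}), a step that, since a sum of PSD matrices is PSD, yields an \emph{implied} (relaxed) constraint but is exactly what sets up the statistical estimate; (iii) the Monte-Carlo replacement~(\ref{p5-33}) of the unavailable $[A\ B]Z^{(i)}$-quadratic form by the measured $Y^{(i)}$-quadratic form corrected by $-r\operatorname{tr}((F_{11}-F_{12}F_{22}^{-1}F_{12}^{\top})\Sigma)I_{N}$, giving~(\ref{p5-3}); and (iv) a Schur complement, legitimate because $F_{22}\succ\pmb{0}$, together with the direct-sum packing into $\tilde D,\tilde F_{11},\tilde F_{12},\tilde F_{22},\tilde I$, which produces~(\ref{p5-66}). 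The remaining constraint of Problem~\ref{p5} and its objective $\operatorname{tr}(M)$ coincide verbatim with~(\ref{p4-2}) and~(\ref{p4-1}), so the discrepancy between the two SDPs is confined to steps~(ii) and~(iii).

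Third, I would justify steps~(ii)--(iii). Substituting $Y=[A\ B]Z+\mathcal{W}$ and expanding $Y^{\top}(\cdot)Y$ produces the $Z$-quadratic, two cross terms $Z^{\top}[A\ B]^{\top}(\cdot)\mathcal{W}$ and its transpose, and the noise term $\mathcal{W}^{\top}(\cdot)\mathcal{W}$; under Assumption~\ref{A1} the $\{\pmb{w}_{k}\}$ are zero-mean i.i.d.\ and independent of $\pmb{x}_{0}$, so $\mathbb{E}[\mathcal{W}^{\top}(\cdot)\mathcal{W}]=\operatorname{tr}((\cdot)\Sigma)I_{N}$, exactly the correction in~(\ref{p5-33}). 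By the strong law of large numbers over the $l$ independent experiments the sample averages converge to the corresponding expectations, so the constraint of Problem~\ref{p5} tends to the expected version of the model-based constraint. For tightness I would use the fact, read off from the proof of Theorem~\ref{th2}, that at $(F^{*},M^{*})=(H^{*},P^{*})$ both LMIs~(\ref{dp1-2}) and~(\ref{dp1-1}) hold with \emph{equality}; an equality $(\cdot)=\pmb{0}$ is preserved under the reversible congruence and under summation, so the relaxation of step~(ii) is active at the model-based optimum and does not displace it, leaving only the sampling error. A solution-continuity argument for the SDP then gives $(F^{d},M^{d})\to(F^{*},M^{*})$, and hence $\hat L^{*}=-(F_{22}^{d})^{-1}(F_{12}^{d})^{\top}\to L^{*}$, the claimed approximation.

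The genuine obstacle, I expect, lies in making steps~(ii)--(iii) quantitative. The cross terms $Z^{\top}[A\ B]^{\top}(\cdot)\mathcal{W}$ have mean-zero diagonal blocks---the regressor $z_{k}$ being independent of the contemporaneous $\pmb{w}_{k}$---but their strictly lower-triangular blocks need not vanish, since $z_{j}$ depends on $\pmb{w}_{k}$ for $k<j$; showing that these stray contributions are negligible (after averaging, or under the chosen sampling protocol) is delicate. Coupled with this is the need to turn the \emph{approximate} equality of the LMI data into an \emph{approximate} equality of optimizers through a quantitative stability bound for the SDP, and to confirm that the relaxation of step~(ii) stays tight under perturbation. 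By contrast, the per-sample congruence in~(i), the Schur and direct-sum manipulations in~(iv), and the final passage to the gain via Theorem~\ref{th2} and~(\ref{lstar}) are routine given the preceding development.
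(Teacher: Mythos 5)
Your proposal follows essentially the same route as the paper: the paper's own proof of Theorem~3 is a three-sentence appeal to the derivation immediately preceding the statement (congruence of the LMI by $Z^{(i)}$, averaging over the $l$ experiments, the Monte-Carlo substitution of $Y^{(i)}$ for $[A\ B]Z^{(i)}$ with the $\operatorname{tr}((\cdot)\Sigma)$ correction, and a Schur complement), followed by citations of Theorem~2 and the Q-function lemma to read off the gain exactly as you do. The obstacles you flag in your final paragraph --- that the averaging step is a one-way relaxation rather than an equivalence, that the cross terms $Z^{\top}[A\ B]^{\top}(\cdot)\mathcal{W}$ do not vanish exactly because later states depend on earlier noise, and that passing from approximate constraint data to approximate optimizers needs an SDP-stability argument --- are genuine, but the paper does not address them either; it simply asserts the equivalence and states the gain only as an approximation.
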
 
\begin{proof}
	It is already shown that according to the properties
	of positive semidefinite  matrices and matrix congruence, (\ref{p5-66}) is equivalent to  (\ref{dp1-2}) in Problem~\ref{p3}. 
	According to Theorem~\ref{th2} and Proposition~2, the parameters of the optimal Q-function can be obtained by solving Problem~\ref{p3}, which is equivalent to Problem~\ref{p5}. Based on Lemma~2, the parameters of the optimal Q-function lead to an estimate of the optimal state feedback controller,  as shown in~(\ref{lo}).
\end{proof}
\begin{Remark}\rm
	In order to ensure that $Z^{(i)}$ has full row  rank, for each $i =  1,\ldots,l$, the length of the sampling time  horizon  $N$ must be more than or equal to the sum of the dimensions of the input and the state.  Moreover, the rank condition in Theorem~\ref{th3} is analogous to the persistent excitation (PE) requirement \cite{willems2005note}
	in some sense. Both of these conditions are designed to allow the system to oscillate sufficiently during the learning process to produce enough data to be able to fully learn the system information. In practice, it is necessary to add a probing noise consisting of a sum of sinusoids or Gaussian white noise to the control to make sure that both conditions hold.
	\end{Remark}
In view of the results given above, Algorithm~\ref{alg:1} provides a way to solve the SLQ problem based on the available data.
\begin{algorithm}[htb]
	\caption{Model-Free  SDP Algorithm}
	\label{alg:1}
	\begin{algorithmic}[1]
		\STATE  Choose a positive discount factor $r$ close to 1 and choose the initial state $\pmb{x}_{0} \sim~\mathcal{N}_{n}( \pmb{\mu}_{0},\Sigma_{0})$, where  $\Sigma_{0}  \succ \pmb{0}$.
		\STATE Apply  control policy  with probing noises to system~(\ref{eq1}) and collect the  data to construct the data matrices $Z^{(i)}$ and $Y^{(i)}$  for each $i =  1,\ldots,l$ until the  condition in Theorem~\ref{th3} is satisfied.		
		\STATE   Solve SDP in Problem~\ref{p5}   for $\left( F^{d},M^{d}\right)$.	
		\STATE  Obtain the  estimate of the optimal  control gain $ \hat{L}^{*} $ associated with SLQ problem  described by~(\ref{lo}).
	\end{algorithmic}
\end{algorithm}	
\begin{Remark}\rm
	It should be noted that the model-free  primal–dual  (MF-PD) algorithm    in  \cite{li2022model} needs an initial stabilizing controller, while our proposed model-free  SDP algorithm does not need an initial stabilizing control policy. In addition, the MF-PD  algorithm    in  \cite{li2022model} is an iterative algorithm. Theoretically, the state-feedback gain reaches its optimal value only when the number of iterations approaches infinity. While our proposed algorithm is  non-iterative. Furthermore, our proposed algorithm does not need hyper-parameter tuning. 	
\end{Remark}
\section{Simulation}
In this section, the proposed model-free SDP algorithm is tested on different systems, consisting of both stable and unstable dynamic systems. In Section 5.1, the effectiveness of the proposed algorithm is verified in the design of an SLQ controller for a turbocharged diesel engine equipped of exhaust gas recirculation. In Section 5.2, a numerical example of an open-loop unstable system is considered. 
\subsection{Example 1: A Stable System}
In this section, we consider the design of an SLQ controller for a turbocharged diesel engine equipped of exhaust gas recirculation. The discrete-time dynamical system is obtained by discretizing the continuous-time one given in \cite{jung2005comparison,jiang2012computational} which is in the form of $\dot{\pmb{x}}  =A_{c} \pmb{x}+B_{c} \pmb{u} $ with 
\begin{equation*}
	A_{c}=\left[\begin{array}{cccccc}
		-0.4125& -0.0248& 0.0741& 0.0089& 0& 0\\
		101.5873& -7.2651& 2.7608& 2.8068& 0& 0\\
		0.0704& 0.0085& -0.0741& -0.0089& 0& 0.0200\\
		0.0878& 0.2672& 0& -0.3674& 0.0044& 0.3962\\
		-1.8414& 0.0990& 0& 0& -0.0343& -0.0330\\
		0& 0& 0 &-359.0000& 187.5364& -87.0316		
	\end{array}\right]\,,
\end{equation*}
\begin{equation*}
B_{c}=\left[\begin{array}{cc}
		-0.0042& 0.0064\\
		-1.0360& 1.5849\\
		0.0042& 0\\
		0.1261& 0\\
		0& -0.0168\\
		0& 0\end{array}\right] 
\end{equation*}
via standard zero-order-hold discretization techniques with sampling time $T = 0.1s$. To
save space, we do not present the coefficient matrices $A$, $B$ here. 
Select  the weight matrices and discount factor  as $Q =\operatorname{diag} (1, 1, 0.1, 0.1, 0.1, 0.1) $, $R = I_2$  and $r = 0.8$, respectively.  Assume that the discrete-time dynamical system 
suffers from additive noise $\pmb{w}_{k}$ and that  $\pmb{w}_{k}\sim\mathcal{N}_{6}\left(0, 0.001I_6\right)$.

In the proceeding, the model-free SDP Algorithm~\ref{alg:1} is utilized to design the optimal controller.
Choose the initial state $\pmb{x}_{0}$ from a Gaussian distribution with   mean vector $\pmb{\mu}_{0}=[1, 2, 3,4,5,6]^{\top}$ and  covariance matrix $\Sigma_{0}= 5I_6 \succ \pmb{0} $ and then apply the input with Gaussian exploration noise to  system~(\ref{eq1}) to generate  $l  = 100000$ state trajectories in the sampling time  horizon $\left\lbrace 0,\ldots,9 \right\rbrace $  to construct the matrices $Z^{(i)}$ and $Y^{(i)}$  for each $i =  1,\ldots,l$.  Problem~\ref{p5} is solved using CVX  for MATLAB  by Mosek  as the core
solver  and the optimal  gain  is acquired  as 
\begin{equation*}
	\hat{L}^{*}=\left[\begin{array}{cccccc}
		0.7448&    0.0328&    0.0270 &  -0.0056&    0.0147&   0\\
		-1.3009&   -0.0528&   -0.0497&   -0.0405 &   0.0015&   -0.0002
	\end{array}\right]\,.
\end{equation*}
Fig.~1  depicts the state trajectories of the closed-loop systems during the data
collection  phase of the reinforcement learning process. 
\begin{figure}[htb]
	\begin{center}
		\includegraphics[width=\textwidth,height=4cm]{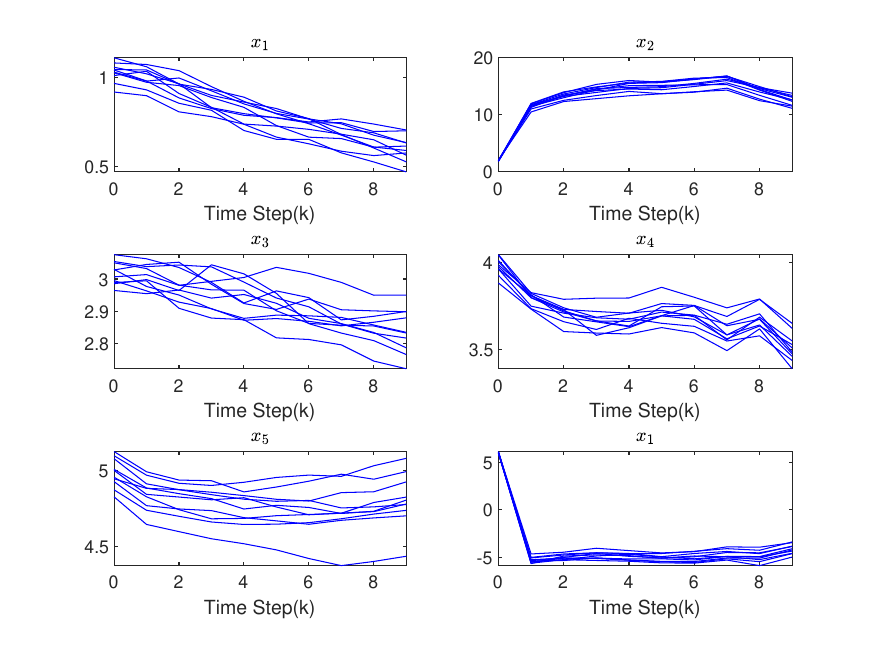}    
		\caption{The state trajectories during the data collection phase.}  
		\label{fig1}                                 
	\end{center}                                 
\end{figure}
Fig.~2
illustrates the numerical average of the state trajectories   using the    data obtained from $100000$ sampling trials  during the model-free design approach, where the trajectories in time domains $\left\lbrace 0,\ldots,9 \right\rbrace $ are the numerical average of  $100000$  trajectories obtained during the data collection phase, at time step $10$, the optimal data-driven controller is implemented and the trajectories are optimally regulated. 
\begin{figure}[htb]
	\begin{center}
		\includegraphics[width=0.8\textwidth,height=4cm]{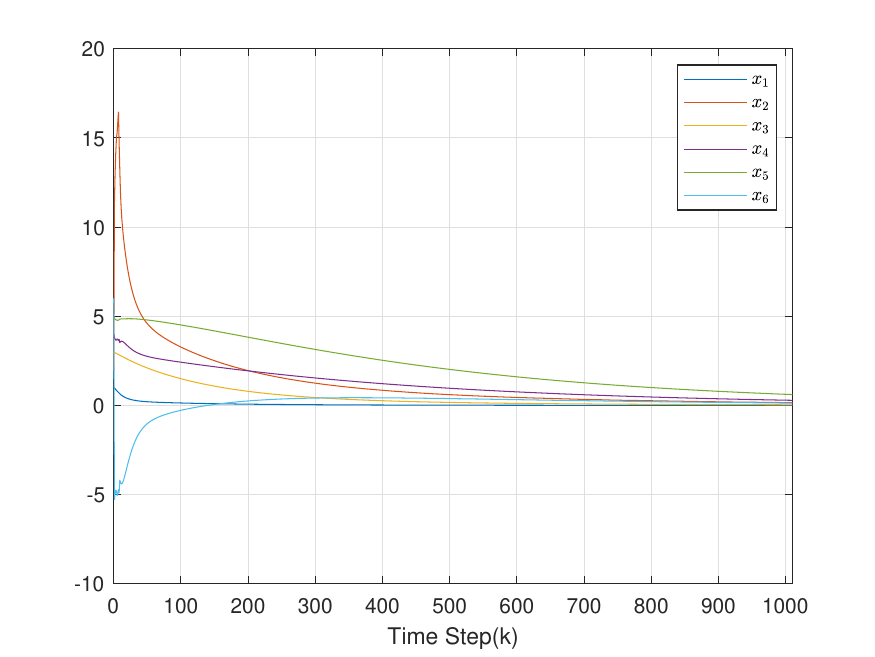}    
		\caption{A numerical average of state trajectories in data collection phase and after learning.}  
		\label{fig2}                                 
	\end{center}                                 
\end{figure}  

In the following, the model-free SDP Algorithm 2 is compared to model-based algorithm. In the presence of no structural constraints, the  solution of DARE~(\ref{are1}) reveals realizable optimal point, which can be seen as a baseline for the simulation results. When the system matrices $A$ and $B$ are available, the optimal control gain can be obtained using the MATLAB command DARE as 
\begin{equation*}
	L^{*}=\left[\begin{array}{cccccc}
		0.7446&    0.0328&    0.0271 &  -0.0057&    0.0146&   0\\
		-1.3011&   -0.0528&   -0.0496&   -0.0404 &   0.0015&   -0.0002
	\end{array}\right]\,.
\end{equation*} 
Comparing these two optimal control gains obtained and subtracting them, we have
\begin{equation*}
	L^{*}-\hat{L}^{*}=10^{-3}\times\left[\begin{array}{cccccc}
		
		-0.1912&    0.0085&    0.0995&   -0.0618&   -0.0493&    0.0052\\
		-0.2288&    0.0093&    0.0614&    0.0260&    0.0012&    0.0038
	\end{array}\right]
\end{equation*}
and  $\left\|L^{*}-\hat{L}^{*}\right\|_{F}=10^{-4}\times3.3123$. 
The above comparison results show that although Algorithm~\ref{alg:1} does not know the information about the system dynamics, the optimal control gain obtained is very close to that obtained from the known model information, and the model-free Algorithm~\ref{alg:1} performs very effectively. 

\subsection{Example 2: An Unstable System}
Consider system~(\ref{eq1}) with
\begin{equation*}
	A=\left[\begin{array}{cccc}
		0.3& 0.4& 0.2& 0.2\\
		0.2& 0.3& 0.3& 0.2\\
		0.2& 0.2& 0.4& 0.4\\
		0.4& 0.2& 0.2& 0.4
	\end{array}\right]\,,\quad B= I_4\,.
\end{equation*}
Select  the weight matrices and discount factor  as 
\begin{equation*}
	Q=\left[\begin{array}{cccccc}
		3& -1& 0& -1\\
		-1& 3& -1& 0\\
		0& -1& 3& -1\\
		-1& 0& -1& 3		
	\end{array}\right]\,, 
\end{equation*}
 $R = I_4$  and $r = 0.8$, respectively.  Assume that  system~(\ref{eq1})
suffers from additive noise $\pmb{w}_{k}$ and that  $\pmb{w}_{k}\sim\mathcal{N}_{4}\left(0, 0.01I_4\right)$.
 The open-loop system is unstable because the maximum eigenvalue is 1.1267. 

The model-free Algorithm~\ref{alg:1} is employed to develop the optimal controller in the steps that follow. Choose  $\pmb{x}_{0}\sim~\mathcal{N}_{n}( \pmb{\mu}_{0},\Sigma_{0})$, where   $\pmb{\mu}_{0}=[1, 2, 3,4]^{\top}$ and   $\Sigma_{0}= 5I_4 \succ \pmb{0} $ and then apply the input with Gaussian exploration noise to  system~(\ref{eq1}) 
to produce $l  = 100$  state trajectories in the sampling time horizon $\left\lbrace 0,\ldots,8 \right\rbrace $   to build the matrices $Z^{(i)}$ and $Y^{(i)}$  for each $i =  1,\ldots,l$.  CVX is used to solve Problem~\ref{p5}  and the optimal control gain is acquired as
\begin{equation*}
	\hat{L}^{*}=\left[\begin{array}{cccc}
		-0.1757 &  -0.2506&   -0.1089&   -0.1038\\
		-0.1085&   -0.1744&   -0.1743&   -0.1027\\
		-0.1046&   -0.1087&   -0.2503&   -0.2453\\
		-0.2530&   -0.1062&   -0.1061&   -0.2475
	\end{array}\right]\,.
\end{equation*}
Fig.~3 depicts the state trajectories derived from the numerical average of the data collected from $100$ sampling experiments during the model-free design approach, where the trajectories in time domains $\left\lbrace 0,\ldots,8 \right\rbrace $ are the state trajectories gathered during the data collection phase of the reinforcement learning process, and starting at moment $9$, the system begins to apply the model-free optimal controller to generate optimal state trajectories.
\begin{figure}[htb]
	\begin{center}
		\includegraphics[width=0.8\textwidth,height=4cm]{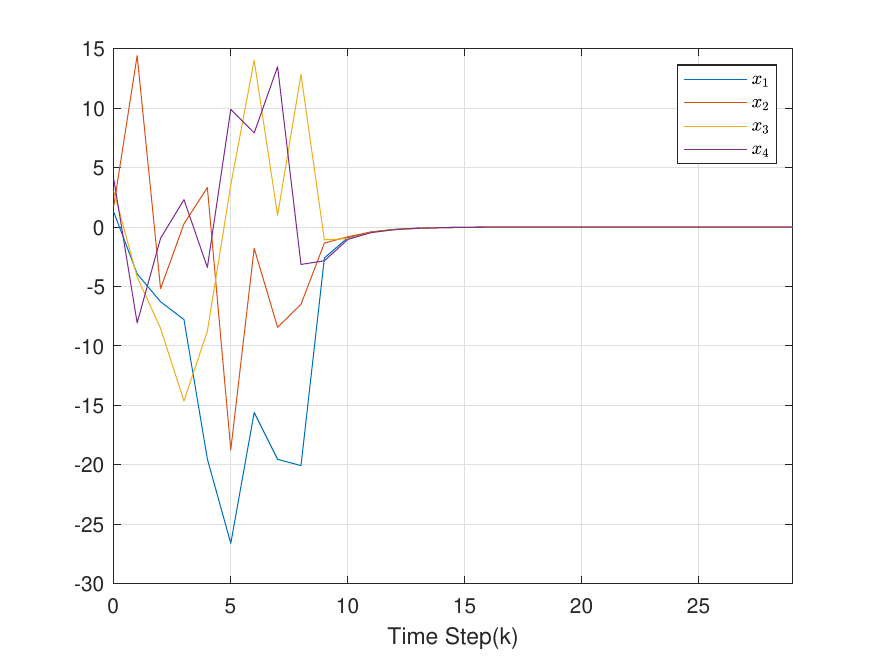}    
		\caption{A numerical average of state trajectories in data collection phase and after learning.}  
		\label{fig3}                                 
	\end{center}                                 
\end{figure}

Utilizing the system model to solve DARE~(\ref{are1}), the optimal controller is obtained as
\begin{equation*}
	L^{*}=\left[\begin{array}{cccc}
		-0.1756 &  -0.2506&   -0.1089&   -0.1039\\
		-0.1084&   -0.1744&   -0.1743&   -0.1028\\
		-0.1046&   -0.1087&   -0.2504&   -0.2453\\
		-0.2529&   -0.1062&   -0.1062&   -0.2476
	\end{array}\right]\,.
\end{equation*} 
Comparing these two optimal control gains obtained and subtracting them, one has
\begin{equation*}
	L^{*}-\hat{L}^{*}=10^{-4}\times
	\left[\begin{array}{cccc}
		0.4984&    0.0699&   -0.0886&   -0.2940\\
		0.5387&   -0.1775&   -0.4655&   -0.4758\\
		0.2987&   -0.0016&   -0.1178&   -0.2228\\
		0.7544&    0.1269&   -0.1820&   -0.4037
	\end{array}\right]
\end{equation*}
and  $\left\|L^{*}-\hat{L}^{*}\right\|_{F}=10^{-4}\times1.4306$. Comparison results indicate that model-free Algorithm~\ref{alg:1} exhibits good performance in terms of accuracy and effectiveness even for unstable systems. 
\section{Conclusion}
In this paper, a novel model-free SDP method for solving
the SLQ problem  has been proposed. The controller design scheme is based on  an SDP  with LMI
constraints and  is sample-efficient and non-iterative. Moreover, it does not necessitate the use of  an initial stabilizing controller.  In addition, the proposed algorithm  has been tested on a turbocharged diesel engine equipped of exhaust gas recirculation and an unstable numerical example. The results confirm that the proposed algorithm is able to  efficiently
learn the optimal control gains and stabilize the systems. Future work  focus on extending the proposed method to stochastic systems with both additive and multiplicative noise by using the proposed framework and performing stability analysis in the model-free design.

\bibliographystyle{elsarticle-num}
\bibliography{gjref.bib}

\begin{thebibliography}{10}
\expandafter\ifx\csname url\endcsname\relax
  \def\url#1{\texttt{#1}}\fi
\expandafter\ifx\csname urlprefix\endcsname\relax\def\urlprefix{URL }\fi
\expandafter\ifx\csname href\endcsname\relax
  \def\href#1#2{#2} \def\path#1{#1}\fi

\bibitem{sutton1999reinforcement}
R.~S. Sutton, A.~G. Barto, Reinforcement learning: {A}n introduction, Robotica
  17~(2) (1999) 229--235.

\bibitem{bertsekas2019reinforcement}
D.~Bertsekas, Reinforcement learning and optimal control, Athena Scientific,
  2019.

\bibitem{bian2016adaptive}
T.~Bian, Y.~Jiang, Z.-P. Jiang, Adaptive dynamic programming for stochastic
  systems with state and control dependent noise, IEEE Transactions on
  Automatic Control 61~(12) (2016) 4170--4175.

\bibitem{kiumarsi2017h}
B.~Kiumarsi, F.~L. Lewis, Z.-P. Jiang, ${H}_ {\infty}$ control of linear
  discrete-time systems: {O}ff-policy reinforcement learning, Automatica 78
  (2017) 144--152.

\bibitem{pang2021robust}
B.~Pang, T.~Bian, Z.-P. Jiang, Robust policy iteration for continuous-time
  linear quadratic regulation, IEEE Transactions on Automatic Control 67~(1)
  (2021) 504--511.

\bibitem{jiang2024online}
X.~Jiang, Y.~Wang, D.~Zhao, L.~Shi, Online pareto optimal control of mean-field
  stochastic multi-player systems using policy iteration, Science China
  Information Sciences 67~(4) (2024) 1--17.

\bibitem{wang2024online}
B.-C. Wang, S.~Li, Y.~Cao, An online value iteration method for
  linear-quadratic mean field social control with unknown dynamics, Science
  China Information Sciences 67~(4) (2024) 1--2.

\bibitem{kiumarsi2017optimal}
B.~Kiumarsi, K.~G. Vamvoudakis, H.~Modares, F.~L. Lewis, Optimal and autonomous
  control using reinforcement learning: {A} survey, IEEE Transactions on Neural
  Networks and Learning Systems 29~(6) (2017) 2042--2062.

\bibitem{bertsekas2012dynamic}
D.~Bertsekas, Dynamic programming and optimal control: Volume I, Vol.~4, Athena
  scientific, 2012.

\bibitem{sutton1988learning}
R.~S. Sutton, Learning to predict by the methods of temporal differences,
  Machine learning 3 (1988) 9--44.

\bibitem{watkins1992q}
C.~J. Watkins, P.~Dayan, Q-learning, Machine learning 8 (1992) 279--292.

\bibitem{bradtke1994adaptive}
S.~J. Bradtke, B.~E. Ydstie, A.~G. Barto, Adaptive linear quadratic control
  using policy iteration, in: Proceedings of 1994 {A}merican Control
  Conference, Vol.~3, IEEE, 1994, pp. 3475--3479.

\bibitem{rizvi2018output}
S.~A.~A. Rizvi, Z.~Lin, Output feedback {Q}-learning control for the
  discrete-time linear quadratic regulator problem, IEEE Transactions on Neural
  Networks and Learning Systems 30~(5) (2018) 1523--1536.

\bibitem{krauth2019finite}
K.~Krauth, S.~Tu, B.~Recht, Finite-time analysis of approximate policy
  iteration for the linear quadratic regulator, Advances in Neural Information
  Processing Systems 32 (2019).

\bibitem{lai2023model}
J.~Lai, J.~Xiong, Z.~Shu, Model-free optimal control of discrete-time systems
  with additive and multiplicative noises, Automatica 147 (2023) 110685.

\bibitem{boyd2004convex}
S.~Boyd, L.~Vandenberghe, Convex optimization, Cambridge University Press,
  2004.

\bibitem{boyd1994linear}
S.~Boyd, L.~El~Ghaoui, E.~Feron, V.~Balakrishnan, Linear matrix inequalities in
  system and control theory, SIAM, 1994.

\bibitem{rami2000linear}
M.~A. Rami, X.~Y. Zhou, Linear matrix inequalities, {R}iccati equations, and
  indefinite stochastic linear quadratic controls, IEEE Transactions on
  Automatic Control 45~(6) (2000) 1131--1143.

\bibitem{yao2001stochastic}
D.~D. Yao, S.~Zhang, X.~Y. Zhou, Stochastic linear-quadratic control via
  semidefinite programming, SIAM Journal on Control and Optimization 40~(3)
  (2001) 801--823.

\bibitem{lee2018primal}
D.~Lee, J.~Hu, Primal-dual {Q}-learning framework for {LQR} design, IEEE
  Transactions on Automatic Control 64~(9) (2018) 3756--3763.

\bibitem{li2022model}
M.~Li, J.~Qin, W.~X. Zheng, Y.~Wang, Y.~Kang, Model-free design of stochastic
  {LQR} controller from a primal--dual optimization perspective, Automatica 140
  (2022) 110253.

\bibitem{farjadnasab2022model}
M.~Farjadnasab, M.~Babazadeh, Model-free {LQR} design by {Q}-function learning,
  Automatica 137 (2022) 110060.

\bibitem{kubrusly1985mean}
C.~Kubrusly, O.~Costa, Mean square stability conditions for discrete stochastic
  bilinear systems, IEEE Transactions on Automatic Control 30~(11) (1985)
  1082--1087.

\bibitem{willems2005note}
J.~C. Willems, P.~Rapisarda, I.~Markovsky, B.~L. De~Moor, A note on persistency
  of excitation, Systems $\&$ Control Letters 54~(4) (2005) 325--329.

\bibitem{jung2005comparison}
M.~Jung, K.~Glover, U.~Christen, Comparison of uncertainty parameterisations
  for ${H}_ {\infty}$ robust control of turbocharged diesel engines, Control
  Engineering Practice 13~(1) (2005) 15--25.

\bibitem{jiang2012computational}
Y.~Jiang, Z.-P. Jiang, Computational adaptive optimal control for
  continuous-time linear systems with completely unknown dynamics, Automatica
  48~(10) (2012) 2699--2704.

\end{thebibliography}







\end{document}